\newdimen\mainfontsize \mainfontsize=1\@ptsize pt
\newtheorem{thm}{Theorem}[section]
\newtheorem{defn}[thm]{Definition}
\newtheorem{lem}[thm]{Lemma}
\newtheorem{rem}[thm]{Remark}
\newtheorem{ass}[thm]{Assumption}
\newtheorem{alg}[thm]{Algorithm}
\newcommand{\be}{{\mathbb E}}
\newcommand{\cf}{{\cal F}}
\newcommand{\cp}{{\cal P}}
\newcommand{\cg}{{\cal G}}
\newcommand{\cT}{{\cal T}}
\newcommand{\br}{\mathbb R}
\newcommand{\mailto}[1]{\href{mailto:#1}{#1}}
\newcommand{\Vup}{V^{\uparrow}}
\newcommand{\martFuns}{\mathbf{M}} 
\begin{document}

\begin{center}
{\Large{\bf Monte Carlo methods via a dual approach for some \\[.1in] discrete time stochastic control problems}}
\end{center}
\vspace*{.1 true in}
\begin{center}
{\large L. G.~Gyurk\'o}\footnote{Mathematical Institute, University of Oxford, 24-29 St Giles, Oxford OX1 3LB, UK. \newline
E-mail:  \mailto{gyurko@maths.ox.ac.uk}},
\quad
{\large B. M.~Hambly}\footnote{Mathematical Institute, University of Oxford,
24-29 St Giles, Oxford OX1 3LB, UK.\newline
~\mbox{ } E-mail:  \mailto{hambly@maths.ox.ac.uk}},
\quad
{\large J. H.~Witte}\footnote{Mathematical Institute, University of Oxford, 24-29 St Giles, Oxford OX1 3LB, UK. \newline
E-mail: \mailto{witte@maths.ox.ac.uk}}

\date{\today}
\end{center}

\begin{abstract}
We consider a class of discrete time stochastic control problems motivated by some
financial applications. We use a pathwise stochastic control approach to provide a
dual formulation of the problem. This enables us to develop a numerical technique
for obtaining an estimate of the value function which improves on purely regression based
methods. We demonstrate the competitiveness of the method on the example of a gas storage valuation problem.
\end{abstract}

\bibliographystyle{plain}

\section{Introduction}

The numerical pricing of options with early exercise features, such as American options, is a challenging problem, 
especially when the dimension of the underlying asset increases. There is a large body of literature which discusses 
this problem from different 
points of view, beginning with techniques aimed at solving the dynamic programming problem using trees or
the associated Hamilton-Jacobi-Bellman equation. Over the past decade, there has been a lot of activity in developing
Monte Carlo techniques for such optimal stopping problems. The most popular have been basis function regression methods
initially proposed in \cite{LonSchw} and \cite{VanRoyTsits}. If these methods are used to provide an approximate optimal
exercise strategy, they naturally provide 
lower bounds for prices. Thus they were soon followed by dual methods \cite{Rog,HauKog} designed to find upper 
bounds. An account of these methods can be found in \cite{glasserman}.

Following on from the development of dual methods for American options, there has been a strand of research 
extending these ideas to multiple optimal stopping problems, which correspond to options with multiple 
exercise features \cite{MeiHam}. The dual method proceeds via the idea of pathwise optimization, which originated 
in \cite{DavKar}. This pathwise optimization method was developed in a general setting in \cite{Rog1} where 
it was applied to more general stochastic control problems.

In this paper, our aim is to consider a subclass of such stochastic control problems for which we can develop a 
relatively simple dual approach and which leads to numerical algorithms for the efficient computation of the 
value function.

We were originally motivated by option pricing problems in the electricity market. In that setting contracts such as 
swing options give the holder certain rights to exercise variable amounts through the lifetime of the contract. 
The dual approach, initiated in \cite{MeiHam}, used a simplistic swing contract
in which a single exercise was allowed on each day, with the total number of exercise rights over the lifetime of the 
contract constrained. In \cite{AleHam, Ben}, this was extended to multiple discrete exercises on a given day. Other 
recent developments have seen a move to continuous time \cite{Ben2} and a `pure martingale' dual formulation
of the problem \cite{Sch}.

Our first aim in this paper is to provide a more general formulation of the dual problem 
in discrete time which allows exercise of continuous amounts and contains the `pure martingale' approach. 

Our second aim is to provide a useful numerical approach to this type of problem. Having moved beyond the 
multiple optimal stopping problem to a more general stochastic control formulation, the space of controls is
now potentially of dimension greater than one, and consequently more difficult to handle. Instead of a purely binary 
decision (or at most a finite set of decisions) at each time point, we have the possibility of choosing from a 
Euclidean space (in the electricity context, this is exercising a real amount corresponding to a volume of power). 
Our dual formulation of the problem leads naturally to an upper bound on the value function. We develop a technique 
based on being given an a priori estimate for the value function, say typically an estimate obtained via basis
function regression, and converting this to an improved estimate via the dual. 

In order to produce the a priori estimate, the method uses least squares regression and Monte 
Carlo techniques, an extension of the approach due to Longstaff-Schwarz \cite{LonSchw} and Tsitsiklis and van Roy
\cite{VanRoyTsits}; we use test functions that depend on both the underlying factor and the control value.
We note that this idea has been considered by Boogert and Jong \cite{BooJon}; however, Boogert and Jong did not 
develop the extended regression based method in detail, but worked with regression depending only on the 
underlying factor for several discrete values of the control. Belomestny et al. \cite{BelKolSch} have also 
developed a family of least squares regression and Monte-Carlo based numerical algorithms. The algorithm in
\cite{BelKolSch} can be applied to more general discrete time control problems than the ones we consider in 
this paper. However, as in \cite{BooJon}, Belomestny et al. regress the conditional expectation arising in the 
dynamic programming principle using test functions depending on the underlying factor only.
When applied to the same control problem, with the right choice of test functions and grid in the space of underlying 
factor and control, we found that our extended regression based method performs better than the method in \cite{BooJon} 
or the method in \cite{BelKolSch}, especially when the control is high dimensional.

The a priori estimate is used as an input to the dual formulation based upper bound. The implementation of the dual
estimate requires the numerical solution of several independent deterministic optimal control problems. We note 
that these control problems can be solved simultaneously, and, hence, it is well suited for a parallel implementation.

As an application, we will focus on one example in this paper, namely natural gas storage valuation. The owner 
of a natural gas storage facility is faced with an optimal control problem in order to maximize the return from 
running the facility. The demand for natural gas is seasonal with high demand and prices in the winter, and low 
demand in the summer. The operator of a facility will want to buy and store gas when it is cheaper over the 
summer, and then sell gas into the market when the price is higher in the winter. The operation of the facility 
is thus a control problem where, on a given day, the operator has the decision to inject or produce a 
volume of gas, given the current price of gas. Thus, we have the set up of a stochastic control problem of the 
type we consider here. We chose the particular gas storage problem as a numerical example in order to compare 
the results of our probabilistic approach to the results of the partial differential equation based methods 
(cf.\,\cite{ChenFors,ThoDavRas}). In general, we expect the probabilistic approach to perform better than the 
PDE methods when the dimension of the underlying factor and/or the dimension of the control is high.

Our numerical example demonstrates that the dual formulation based upper bound is sharper than the one 
we get from the a priori estimate at comparable computational expense. This empirical observation justifies 
the potential benefit of computing the dual formulation based estimate in practice.

The outline of the paper is as follows. We will begin with the setup for the problem in Section~2 and follow this 
with the dual formulation in Section~3. We obtain our main representation in Theorem~\ref{_thm_dual_formulation}, 
and then derive a 
version which can be used for the Monte Carlo based numerical technique in Lemma~\ref{lem_UpperLowerDifference}. 
We follow this with a discussion of the numerical technique itself in Section~4. Finally, we apply the approach to 
the gas storage problem in the last section.

\section{Discrete time decoupled stochastic control problems}

We consider an economy in discrete time defined up to a finite time horizon $T$. We assume a 
financial market described by the filtered probability space $(\Omega, \mathcal{F},
(\mathcal{F}_t)_{t\in\cT},\mathbb{P})$, where $\cT=\{0,1,\dots,T\}$. We take $(X_t)_{t\in\cT}$ to be 
an $\mathbb{R}^d$-valued discrete time Markov chain representing the price of the
underlying assets and any other variables that affect the dynamics
of the underlyings. We assume that the filtration $(\mathcal{F}_t)_{t\in\cT}$ is generated by $X$. 
Moreover, we assume that $\mathbb{P}$ is a risk neutral
pricing measure, and write ${\mathbb E}_t(X)={\mathbb E}(X|\mathcal{F}_t)$ for any random
variable $X$ on our probability space. Throughout the paper, we will assume that interest rates are 0.

We phrase our problem in the language of options, even though it is a standard stochastic control problem
of maximizing a reward obtained from a randomly evolving system. The payoff of the option (or the 
reward for the position) $H_t(h,X_t)$ 
at time $t\in\mathcal{T}$ is a function of the underlying $(X_t)_{t\in\cT}$ and 
the exercise amount $h\in \br^{l}$, which is chosen by the holder of the option subject to certain constraints.
The problems that we consider are decoupled in that the decision at time $t$ regarding $h_t$ has no impact on
the evolution of the underlying state of the economy $X_s$ for $s>t$.

The set of \emph{ admissible exercise decisions } available at a given time is defined by a (set-valued) function 
$K$ on $\cT\times \br^l \times \br^d$ that takes values in the set of subsets of $\br^l$. The control process
$(y_t)_{t\in\cT}\subset\br^{l}$ is defined by $t\ge 0$, $y_{t+1}:=y_t-h_t$, for the exercise amount $h_t$.
We will write $K_t(y_t,X_t(\omega))$, or, if needed, $K(t,y_t,X_t(\omega))$, for the given set of admissible 
exercise decisions depending on $t$, the state of the underlyings $X_t$, and 
the value of the control process $y_t$. The initial value $y_0$ and the constraints 
$K_t(\cdot,\cdot)$ for $t\in\mathcal{T}$ are determined by the option contract.


\begin{defn}[$K$-admissible exercise policy]\label{_def_KAdmissiblePolicy}
A policy, or exercise strategy, $\pi=(h_t,\dots,h_T)$ is a $K$-\emph{admissible exercise policy} on 
$\{t,\dots,T\}$ started at $y$ if it satisfies all of the following properties.
\begin{enumerate}[(i)]
\item $h_s$ is $\mathcal{F}_s$-measurable for $s=t,\dots, T$, 
\item and $h_s(\omega)\in K_s(y_s,X_s(\omega))$ for all $s=t,\dots,T$ and for all $\omega$ in a set of probability one,
\end{enumerate}
where $y_s$ is defined recursively by $y_t=y$ and $y_{s+1}=y_{s}-h_{s}$ for $s=t,\dots,T-1$. 

The set of such policies is denoted by $\cp_{K,y,t}$.
\end{defn}

Thus, a $K$-admissible exercise policy $\pi$ on the time-set $\{t,\dots,T\}$ is defined by the $\cf$-adapted process 
$(h_s)_{s=t,\dots,T}$ describing the exercise decisions at times $t,\dots,T$, and the value of such an exercise policy 
$\pi$ at time $t$ is given by
\begin{equation}
V^{K,\pi}_t(y_t,x) = \be\left[\sum_{s=t}^T H_s(h_s,X_s) | X_t=x\right]. 
\label{_eq_PolicyValue}
\end{equation}
In the particular examples considered in this paper, the set $K_t$ will be a line segment in $\br$ or a quadrant of $\br^2$.

We are now in a position to define the value function $V^{K,*}_t(y_t,X_t)$ 
at time $t$ of the option satisfying the constraints $K$.

%


\begin{defn}\label{_def_ValueFunction}
We define the value function to be
\[ V^{K,*}_t(y,x) 
= \sup_{\pi\in \cp_{K,y,t}} V^{K,\pi}_t(y,x) 
= \sup_{\pi\in \cp_{K,y,t}}\be\left[\sum_{s=t}^T H_s(h_s,X_s)|X_t=x\right]
, \ (y,x)\in\br^l\times\br^d.
\]
\end{defn}

For simplicity, we make the following assumption. 
\begin{ass}\label{ass-dpp}
There exists a set $Y_0^K$ of initial control values 
and a bound $C$ such that
\[ \be[|H_s(h,X_s)|] <C \;\;\forall s \in \cT, h \in K_s(y,X_s), \]
for all $(y,X_s)$ reachable at time $s$ from $Y_0^K \times \{X_0(\omega)|\omega\in\Omega\}$ by a $K$-admissible 
policy. 
\end{ass}
This is enough to ensure the existence of the value function and the dynamic programming principle. Weaker
assumptions which guarantee existence would be possible, but are not the focus of this paper.



In order to indicate the type of problems that fit into this framework, we give three examples. In the final section, we 
will focus on the first.

\vspace{.1in}
\noindent \underline{Gas storage valuation:}\\[.1in]
Natural gas storage valuation and optimal operation can be formulated as an option contract as described above. 
In particular, let $X_t\in\br$ denote the spot price of natural gas at time $t$, and let $y_t\in\br$ denote the amount of 
gas stored in the facility at time $t$. In that case, with $h_t$ denoting the change of level between $t$ and $t+1$, the 
payoff is defined by
\[
H_t(h_t,X_t)=h_tX_t.
\]
More accurate models may also take into account the loss of gas occurring at injection. 

At time $t$, the set $K_t(y_t,X_t)$ is determined by the maximum and minimum capacity of the gas storage facility, 
and by the injection/production rate depending on the stored amount $X_t$. A continuous time description of this 
problem was given in \cite{LudCar} and in \cite{ThoDavRas}. In section \ref{_sec_gas_storage}, we present 
a time-discretized version. 

\vspace{.1in}
\noindent\underline{Swing option pricing:}\\[.1in]
In the electricity market, a swing option enables the holder to protect themselves
against the risk of price spikes if they are exposed to the spot price of electricity $X$.
The simplest versions give their holder the
right, for a specified
period of time, to purchase each day (on- or off- peak time)  electricity at a fixed price $K$ (strike price). In this case, the payoff is that of a 
call $(X_t-K)^+$. When exercising a swing option at a time $t$, the amount purchased may vary (or
swing) between a minimum volume, $m_t$, and a maximum volume, $M_t$, while the total quantity
purchased for the period must remain within minimum $\bar{m}$ and maximum $\bar{M}$ volume levels.
Thus, we have
\[ H_t(h_t,X_t) = h_t\max(X_t-K,0), \]
with $m_t\leq h_t \leq M_t$ and $\bar{m} \leq \sum_{t=0}^T h_t \leq \bar{M}$. The set $K_t$ is the line segment
determined by these constraints.

\vspace{.1in}
\noindent\underline{Optimal liquidation:}\\[.1in]
A similar approach can be pursued to model the optimal liquidation of a portfolio of dependent assets. 
Let $X_t=(X^1_t,X^2_t)\in\br^2$ denote the value of two bonds by the same issuer with different 
issue dates. We assume that, whenever $X^1$ is traded, there is a temporary price impact on both, 
$X^1$ and $X^2$, referred to as a \emph{multi-asset price impact}. For example, if
$h_t=(h^1_t,h^2_t)\in\br^+\times\br^+$ denotes the quantities sold of the bonds 
$X^1$ and $X^2$, respectively, the payoff is
\[
H_t(h_t,X_t)=h_t^1X_t^1+h_t^2X_t^2-(h_t^T\Lambda h_t)^{\beta}
\] 
for some $\beta>1/2$ and some matrix $\Lambda\in\br^{2\times 2}$. The reader is referred to 
\cite{torsten} for further details. The one-dimensional case is considered in \cite{AleHam2}.

In this setting, the set $K_t$ is determined by the total volume of each bond that we hold and wish to liquidate, and
is hence a subset of $\mathbb{R}^2$.

Many models (see for example \cite{AlmgrenChriss})  consider modelling the permanent price impact of trades 
on top of incorporating the temporary impact.  
In the presence of permanent impact, the optimal liquidation problem is typically formulated as a coupled stochastic 
control problem. The approach based on coupled control problems falls beyond the scope of this paper.  For certain 
models of permanent price impact (e.g. non-resilient impact), the a priori estimate presented in 
Section~\ref{_seq_num_lower_bound} can be easily adapted. Deriving a dual formulation, however, is 
less straightforward. 


\section{Dual formulation}

Definition \ref{_def_ValueFunction} represents the value of the option as the supremum over the set of 
admissible exercise policies. We now develop a dual for this problem that represents the option value as an infimum 
over a space of martingale-valued functions. Let $\martFuns$ denote the space of functions defined on $\br^l$ 
and taking values in the space ${\cal M}_0$ of martingales which are adapted to the filtration 
$(\mathcal{F}_t)_{t\in\cT}$ and null at time $0$. For $M\in\martFuns$, $y\in\br^l$, $t\in\cT$, $M^y_t$ 
denotes the time-$t$ value of $M(y)\in{\cal M}_0$. 

\begin{thm}\label{_thm_dual_formulation}
Let $K$ be a function defined on $\cT\times \br^l \times \br^d$ and taking values in the set of subsets of $\br^l$. Then, for all $y\in Y_0^K$, the value $V_0^{K,*}(y,X_0)$ of the option at time $0$ almost surely satisfies 
the following. 
\begin{equation}
V_0^{K,*}(y,x) 
= \inf_{M\in \martFuns} \be\left[\sup_{\pi\in \cp_{K,y,0}} \sum_{t=0}^{T-1}( H_t(h_t,X_t) - M^{y_{t+1}}_{t+1}
+ M^{y_{t+1}}_t ) + H_T(h_T,X_T)\bigg|X_0=x\right].
\label{_eq_dual_thm}
\end{equation}
Moreover,
\begin{equation}
V_0^{K,*}(y,x) 
= \be\left[\sup_{\pi\in \cp_{K,y,0}} \sum_{t=0}^{T-1}( H_t(h_t,X_t) - M^{K,y_{t+1}}_{t+1}
+ M^{K,y_{t+1}}_t ) + H_T(h_T,X_T)\bigg|X_0=x\right],
\label{_eq_dual_thm}
\end{equation}
where $M^{K,y}\in\mathcal{M}_0$ such that 
\begin{equation*}
M^{K,y}_{t+1}:=M^{K,y}_t + V_{t+1}^{K,*}(y_,X_{t+1})-
\be_t\left[V_{t+1}^{K,*}(y,X_{t+1})\right]. 
\end{equation*}

\end{thm}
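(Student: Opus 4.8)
The plan is to prove both identities by a squeeze, organised around one elementary observation: for every admissible $\pi$ the control value $y_{t+1}=y_t-h_t$ is $\cf_t$-measurable, so each martingale increment appearing in the dual sum has zero conditional expectation and hence contributes nothing in expectation, \emph{whatever} $M\in\martFuns$ is chosen. Concretely, fix $M\in\martFuns$ and $\pi\in\cp_{K,y,0}$ and set
\[
S_M(\pi):=\sum_{t=0}^{T-1}\bigl(H_t(h_t,X_t)-M^{y_{t+1}}_{t+1}+M^{y_{t+1}}_t\bigr)+H_T(h_T,X_T).
\]
Conditionally on $\cf_t$ the value $y_{t+1}$ is frozen, so $M(y_{t+1})$ is a fixed $\mathcal{M}_0$-martingale and $\be_t\bigl[M^{y_{t+1}}_{t+1}-M^{y_{t+1}}_t\bigr]=0$; by the tower property every such increment vanishes under $\be[\,\cdot\mid X_0=x]$. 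Therefore $\be[S_M(\pi)\mid X_0=x]=V^{K,\pi}_0(y,x)$ \emph{exactly}, for all $M$ and all $\pi$.

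First I would establish weak duality. Taking the pathwise supremum over $\pi$ and using $\be[\sup_\pi S_M(\pi)\mid X_0=x]\ge\sup_\pi\be[S_M(\pi)\mid X_0=x]$ together with the identity above gives, for every $M$,
\[
\be\Bigl[\sup_{\pi\in\cp_{K,y,0}}S_M(\pi)\,\Big|\,X_0=x\Bigr]\ \ge\ \sup_{\pi\in\cp_{K,y,0}}V^{K,\pi}_0(y,x)\ =\ V_0^{K,*}(y,x),
\]
so the right-hand side of the first identity is $\ge V_0^{K,*}(y,x)$ and, after the infimum over $M$, so is the infimum. Next I would show the designated martingale $M^{K,y}$ attains this bound. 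Substituting $M=M^{K,y}$ and again freezing $y_{t+1}$, the increment $M^{K,y_{t+1}}_{t+1}-M^{K,y_{t+1}}_t$ equals the Doob increment $V_{t+1}^{K,*}(y_{t+1},X_{t+1})-\be_t[V_{t+1}^{K,*}(y_{t+1},X_{t+1})]$. Invoking the dynamic programming principle supplied by Assumption~\ref{ass-dpp}, namely $H_t(h_t,X_t)+\be_t[V_{t+1}^{K,*}(y_{t+1},X_{t+1})]\le V_t^{K,*}(y_t,X_t)$ for admissible $h_t$, each summand is bounded by $V_t^{K,*}(y_t,X_t)-V_{t+1}^{K,*}(y_{t+1},X_{t+1})$; the sum telescopes and, adding $H_T(h_T,X_T)\le V_T^{K,*}(y_T,X_T)$, yields the pathwise bound $S_{M^{K,y}}(\pi)\le V_0^{K,*}(y,X_0)$ for every $\pi$. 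Hence $\sup_\pi S_{M^{K,y}}(\pi)\le V_0^{K,*}(y,X_0)$ almost surely, and taking $\be[\,\cdot\mid X_0=x]$ gives the reverse inequality for this $M$. The two bounds squeeze to equality, which is the second identity; since $M^{K,y}$ realises the weak-duality lower bound, the infimum in the first identity is attained and equals $V_0^{K,*}(y,x)$.

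I expect the substantive difficulty to be measure-theoretic rather than algebraic. I must verify that the pathwise supremum $\sup_{\pi\in\cp_{K,y,0}}S_M(\pi)$ is a bona fide measurable, integrable random variable, so that the exchange $\be[\sup_\pi S_M]\ge\sup_\pi\be[S_M]$, the freezing of $y_{t+1}$, and the conditioning on $\{X_0=x\}$ are all justified. The decoupled Markov structure is what makes this tractable: since $K_t(y_t,X_t)$ and $H_t$ depend only on $(t,y_t,X_t)$, the inner optimisation can be realised as a backward recursion over measurable selections of the $h_t$, and the uniform integrability needed for the Fubini/tower steps is exactly what Assumption~\ref{ass-dpp} provides. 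A secondary point, which I would quote from Assumption~\ref{ass-dpp} rather than reprove, is the dynamic programming principle itself together with its terminal condition $V_T^{K,*}(y,x)=\sup_{h\in K_T(y,x)}H_T(h,x)$, on which the telescoping argument rests.
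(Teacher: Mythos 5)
Your proposal is correct and follows essentially the same route as the paper: weak duality by inserting the zero-conditional-mean martingale increments (using that $y_{t+1}$ is $\mathcal{F}_t$-measurable) and interchanging supremum with expectation, then the reverse inequality by substituting the Doob martingale of the value function, invoking the dynamic programming inequality, and telescoping the sum. Your treatment of the terminal term via $H_T(h_T,X_T)\le V_T^{K,*}(y_T,X_T)$ is in fact slightly cleaner than the paper's corresponding step, and your explicit flagging of the measurability/integrability issues covers exactly what the paper leaves implicit under Assumption~\ref{ass-dpp}.
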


\begin{proof}
We follow a similar approach to that of Rogers \cite{Rog1}. We have
\begin{align*}
V_0^{K,*}(y,x) 
&= \sup_{\pi\in \cp_{K,y,0}} \be\left[\sum_{s=0}^T H_s(h_s,X_s)\bigg|X_0=x\right] \\
&= \sup_{\pi\in \cp_{K,y,0}}\be\left[\sum_{s=0}^{T-1}\big( H_s(h_s,X_s)-M^{y_{s+1}}_{s+1}
+M^{y_{s+1}}_s\big) + H_T(h_T,X_T)\bigg |X_0=x\right] \\
&\leq  \be\left[ \left\{\sup_{\pi\in \cp_{K,y,0}}\sum_{s=0}^{T-1}\big( H_{s}(h_s,X_s)-M^{y_{s+1}}_{s+1}
+M^{y_{s+1}}_s\big) + H_T(h_T,X_T)\right\}\bigg |X_0=x\right].
\end{align*}
As this holds for all martingales $M^w$, we then have
\[ 
V_0^{K,*}(y,x) 
\leq \inf_{M\in\martFuns} \be\left[\sup_{\pi\in \cp_{K,y,0}} \sum_{s=0}^{T-1}\big( H_s(h_s,X_s) - M^{y_{s+1}}_{s+1}
+M^{y_{s+1}}_s \big) + H_T(h_T,X_T)\bigg|X_0=x\right]. 
\]

To see that the inequality holds the other way around, we consider a particular family of martingales. The one that we
take is $\{M_t^{K,y_t},t\in\cT\setminus T\}$ from the Doob decomposition of the value function. Thus, its increments are given by
\[ 
\Delta M^{K,y_{t+1}}_t = M^{K,y_{t+1}}_{t+1}-M^{K,y_{t+1}}_t = V_{t+1}^{K,*}(y_{t+1},X_{t+1})-
\be_t\left[V_{t+1}^{K,*}(y_{t+1},X_{t+1})\right]. 
\]
Using this martingale, we have
\begin{align*}
&  \inf_{M\in\martFuns} \be\left[\sup_{\pi\in \cp_{K,y,0}}\left\{\sum_{s=0}^{T-1}( H_s(h_s,X_s) - M^{y_{s+1}}_{s+1}
+M^{y_{s+1}}_s ) + H_T(h_T,X_T)\right\}\bigg|X_0=x\right] \\
&\qquad \qquad \leq  \be\left[\sup_{\pi\in \cp_{K,y,0}}\left\{\sum_{s=0}^{T-1}( H_s(h_s,X_s) - \Delta M^{K,y_{s+1}}_s) 
+ H_T(h_T,X_T)\right\}\bigg|X_0=x\right] \\
&\qquad\qquad = \be\Bigg[\sup_{\pi\in \cp_{K,y,0}}\Bigg\{\sum_{s=0}^{T-1}\big( H_s(h_s,X_s) - V_{s+1}^{K,*}(y_{s+1},X_{s+1})+\be_s\left[V_{s+1}^{K,*}(y_{s+1},X_{s+1})\right]\big) \\
& \qquad\qquad\qquad\qquad\qquad+ H_T(h_T,X_t)\Bigg\}\bigg|X_0=x\Bigg].
\end{align*}

By the definition of the value function $V^{K,*}_t(\cdot,\cdot)$, for any $(y,x)\in\mathbb{R}^l\times\mathbb{R}^d$, $t\in\mathcal{T}$, and $h\in K_t(y,x)$, we have 
\[ V^{K,*}_t(y,x) 
\ge H_t(h,x)+\sup_{\pi\in \cp_{K,y-h,t+1}}\be\left[\sum_{s=t+1}^T H_s(h_s,X_s)\big|X_t=x\right],
\]
and, therefore,
\[ V_t^{K,*}(y,x) \geq H_t(h,x) + \be\left[ V_{t+1}^{K,*}(y-h,X_{t+1})|X_t=x\right]. \]
Hence,
\begin{align*}
&  \inf_{M\in\martFuns} \be\left[\sup_{\pi\in \cp_{K,y,0}}\left\{\sum_{s=0}^{T-1}( H_s(h_s,X_s) - M^{y_{s+1}}_{s+1}
+M^{y_{s+1}}_s ) + H_T(h_T,X_T)\right\}\bigg|X_0=x\right] \\
&\qquad\qquad \leq  \be\left[\sup_{\pi\in \cp_{K,y,0}} \left\{\sum_{s=0}^{T-1}\big( V_s^{K,*}(y_s,X_s) - 
V_{s+1}^{K,*}(y_{s+1},X_{s+1})\big) + H_T(h_T,X_T)\right\}\bigg|X_0=x\right] \\
&\qquad \qquad = V_0^{K,*}(y,x) + \be\left[\sup_{\pi\in \cp_{K,y,0}} \big\{H_T(h_T,X_T)-V_T^{K,*}(h_T,X_T)\big\}\right]. 
\end{align*}
Now, using the fact that at $T$ we must have $V_T^{K,*}(y,x)=H_T(y,x)$, we have
\[ \inf_{M\in\martFuns} \be\left[\sup_{\pi\in \cp_{K,y,0}}\left\{\sum_{s=0}^{T-1}( H_s(h_s,X_s) - M^{y_{s+1}}_{s+1}
+M^{y_{s+1}}_s ) + H_T(h_T,X_T)\right\}\bigg|X_0=x\right] \leq  V_0^{K,*}(y,x)\]
as required. 
\end{proof}

\begin{rem} {\rm
Consider the specification of the multiple stopping problem in \cite{Sch}.
The payoff function is $H_t(h,x) = hx$ for $t\in\mathcal{T}$, the control satisfies $0<y_0\le T+1$ and 
takes non-negative integer values, and the constraint sets are defined by
\begin{equation*}
K_t(y,x)=K_t(y)=\left\{
\begin{array}{cl}
\{1\} & \text{ if } y \ge T-t+1, \\
\{0,1\} & \text{ if } T-t+1>y>0, \\
\{0\} & \text{ if } y=0.
\end{array}
\right.
\end{equation*}
In this special case, the payoff value is either $0$ or $X_t$.  Hence, \eqref{_eq_dual_thm} simplifies to the following. 
\begin{equation*}
V_0^{K,*}(y_0,x) = \inf_{M^1,\dots,M^k\in\mathcal{M}_0} \be\left[\max_{0\le t_1 < \cdots < t_{y_0} \le T}
\sum_{k=1}^{y_0}( X_{t_k} - M^{y_0-k}_{t_{k+1}}
+ M^{y_0-k}_{t_k} ) \bigg|X_0=x\right].
\end{equation*}
The dual formulation in this form coincides with the result obtained in \cite{Sch}.}
\end{rem}

In general, we need to solve the deterministic control problem along the path in order to use this approach. If we have
a good approximation to the value function, then we can use the martingale arising from its Doob 
decomposition, as this will be an approximation to the optimal martingale. 

We note that, if we are given a set of approximations to the value function, we can bound the error made in the upper
bound arising from the dual formulation in terms of what are essentially the errors in the dynamic programming equations.

More specifically, let $V_t(y,x)$, $t=0,\dots,T$ be a set of approximations to the value function and denote by $\Vup_t(y,x)$ the 
associated upper bound on the value function.

\begin{lem}\label{lem_UpperLowerDifference}
The difference between the a priori estimate for the value function and the associated estimate arising from the dual
formulation can be expressed as
\begin{align*}
& \Vup_t(y,x)-V_t(y,x)= \\
& \qquad\qquad \be\left[\sup_{\pi\in\cp_{K,y,t}} \sum_{s=t}^{T-1} H_s(h_s,X_s)+\be_s\left[V_{s+1}(y_{s+1},X_{s+1})\right]-V_s(y_s,X_s)\bigg\vert X_t=x\right].
\end{align*}
\end{lem}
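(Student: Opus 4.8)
The plan is to start from the definition of the dual upper bound and reduce it to a telescoping computation. By construction, $\Vup_t(y,x)$ is the dual expression of Theorem~\ref{_thm_dual_formulation} (now started at time $t$) evaluated at the particular martingale arising from the Doob decomposition of the \emph{approximation} $V$, i.e.\ the one with increments
\[
\Delta M^{y_{s+1}}_s = V_{s+1}(y_{s+1},X_{s+1}) - \be_s\left[V_{s+1}(y_{s+1},X_{s+1})\right].
\]
Substituting these increments into the dual functional, I would write
\[
\Vup_t(y,x) = \be\left[\sup_{\pi\in\cp_{K,y,t}}\left\{\sum_{s=t}^{T-1}\big(H_s(h_s,X_s) - V_{s+1}(y_{s+1},X_{s+1}) + \be_s\left[V_{s+1}(y_{s+1},X_{s+1})\right]\big) + H_T(h_T,X_T)\right\}\bigg\vert X_t=x\right].
\]

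The key algebraic step is then to add and subtract $V_s(y_s,X_s)$ inside each summand, splitting the sum into the claimed error term
\[
\sum_{s=t}^{T-1}\big(H_s(h_s,X_s) + \be_s\left[V_{s+1}(y_{s+1},X_{s+1})\right] - V_s(y_s,X_s)\big)
\]
plus the telescoping remainder $\sum_{s=t}^{T-1}\big(V_s(y_s,X_s) - V_{s+1}(y_{s+1},X_{s+1})\big) = V_t(y_t,X_t) - V_T(y_T,X_T)$. This mirrors exactly the rearrangement used in the second half of the proof of Theorem~\ref{_thm_dual_formulation}, the only difference being that here the telescope is with respect to the approximation $V$ rather than the true value function $V^{K,*}$, so no inequality is incurred and the identity is exact.

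Combining the telescoping remainder with the terminal payoff $H_T(h_T,X_T)$ and invoking the terminal condition $V_T = H_T$ (together with the convention, already used in the proof of Theorem~\ref{_thm_dual_formulation}, that $y_T = h_T$, so that $V_T(y_T,X_T) = H_T(h_T,X_T)$), these two boundary contributions cancel and leave exactly $V_t(y_t,X_t)$. Since $y_t = y$ and the conditioning fixes $X_t = x$, the quantity $V_t(y,x)$ is deterministic and therefore passes through both the pathwise supremum over $\pi\in\cp_{K,y,t}$ and the conditional expectation unchanged; pulling it out and subtracting it from both sides yields the stated identity. I expect the one point genuinely requiring care to be precisely this handling of the terminal layer: one must confirm that $V_T(y_T,X_T)$ coincides with $H_T(h_T,X_T)$ along \emph{every} admissible path, so that the cancellation is an exact equality rather than the one-sided bound that appears in the theorem.
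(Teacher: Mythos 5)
Your proposal is correct and follows essentially the same route as the paper's own proof: substituting the Doob-type martingale increments of the approximation $V$ into the dual expression, adding and subtracting $V_s(y_s,X_s)$ to produce the error sum plus a telescoping remainder $V_t(y_t,X_t)-V_T(y_T,X_T)$, cancelling the terminal terms via $V_T(y_T,X_T)=H_T(h_T,X_T)$, and pulling the deterministic $V_t(y,x)$ out of the supremum and conditional expectation. The terminal-layer point you flag is handled in the paper in exactly the same (implicit) way, so there is nothing further to add.
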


\begin{proof}
We choose the martingale in the upper bound to be the one generated by $V_t(y,x)$, and, thus,
\[ 
M^{y}_{t+1}-M^{y}_t = V_{t+1}(y,X_{t+1}) - \be_t\left[ V_{t+1}(y,X_{t+1})\right].
\]
Substituting this into the dual formulation given in Theorem~\ref{_thm_dual_formulation}, we have
{
\allowdisplaybreaks
\begin{align*}
\Vup_t(y,x)&=
\be\left[\sup_{\pi\in\cp_{K,y,t}} \sum_{s=t}^{T-1}\left\{H_s(h_s,X_s)-M^{y_{s+1}}_{s+1}+M^{y_{s+1}}_s\right\} + H_T(h_T,X_T)\bigg\vert X_t=x\right] 
\\
&=
\be\bigg[\sup_{\pi\in\cp_{K,y,t}} \sum_{s=t}^{T-1}\left\{H_s(h_s,X_s)-V_{s+1}(y_{s+1},X_{s+1}) + \be_s\left[ V_{s+1}(y_{s+1},X_{s+1})\right]\right\} \\
& \qquad\qquad\qquad+ H_T(h_T,X_T)\bigg\vert X_t=x\bigg] 
\\
&=
\be\bigg[\sup_{\pi\in\cp_{K,y,t}} \sum_{s=t}^{T-1}\big\{H_s(h_s,X_s)-V_{s+1}(y_{s+1},X_{s+1})
+ V_s(y_s,X_s)\\
& \qquad\qquad\qquad\qquad + \be_s\left[ V_{s+1}(y_{s+1},X_{s+1})\right]-V_s(y_s,X_s)\big\}
+ H_T(h_T,X_T)\bigg\vert X_t=x\bigg] 
\\
&=
\be\bigg[\sup_{\pi\in\cp_{K,y,t}} \sum_{s=t}^{T-1}\left\{H_s(h_s,X_s) + \be_s\left[ V_{s+1}(y_{s+1},X_{s+1})\right]-V_s(y_s,X_s)\right\} \\ 
&\qquad\qquad\qquad +V_t(y_t,X_t)-V_T(y_T,X_T)+ H_T(h_T,X_T)\bigg\vert X_t=x\bigg] 
\\
&= V_t(y,x) + \be\left[\sup_{\pi\in\cp_{K,y,t}} \sum_{s=t}^{T-1} 
\left\{ H_s(h_s,X_s)+\be_s \left[V_{s+1}(y_{s+1},X_{s+1})\right]-V_s(y_s,X_s)\right\}\bigg\vert X_t=x\right] 
\end{align*}
}
as $V_T(y_T,X_T) = H_T(h_T,X_T)$, giving the required result.
\end{proof}

\section{The numerical approach}


We now present a numerical implementation of the dual upper bound derived in 
Lemma \ref{lem_UpperLowerDifference}. 
The lemma gives a representation of the difference between an upper bound $\Vup_0(y,x)$ and another 
(a priori) approximation $V_t(y,x)$ of $V^{K,*}_t(y,x)$.  In Section \ref{_seq_num_upper_bound}, we 
present a numerical method that approximates $\Vup_0(y,x)$ given that approximations of the functions 
$V_t(y,x)$ and
\begin{equation}
(x,y)\mapsto \be\left[V_{t+1}(y,X_{t+1})|X_t=x\right]
\label{_eq_conditional_expectation_of_candidate_solution}
\end{equation}
are available. 

In Section \ref{_seq_num_lower_bound}, we introduce an approach to generate an a priori estimate $V_t(y,x)$ and an approximation of the conditional expectation \eqref{_eq_conditional_expectation_of_candidate_solution}.  
\subsection{Estimating the dual upper bound}\label{_seq_num_upper_bound}
In this section, we assume that a set of a priori approximations $V_t(y,x)$ is available, i.e., for $t=0,\dots,T-1$, the function $V_t(y,x)$ represents an approximation of $V^{K,*}_t(y,x)$.
Furthermore, we assume that, for $t=0,\dots,T-1$, the function $V_t(x,y)$ and (an estimate of) 
\[
(x,y)\mapsto \be\left[V_{t+1}(y,X_{t+1})|X_t=x\right]
\]
can be computed for any time-$t$ reachable pair $(x,y)$.

Under such assumptions, we introduce a numerical method that implements the upper estimate $\Vup_0(y,x)$ derived in Lemma \ref{lem_UpperLowerDifference}. 
Lemma \ref{lem_UpperLowerDifference} requires the estimation of a path-wise optimum. Hence, given a trajectory $x_{\cdot}=\{x_0,\dots,x_T\}$, we aim to approximate the function
\[
F_t(y,x_{\cdot}) := \sup_{\pi\in\cp_{K,y,t}} \sum_{s=t}^{T-1} \big\{H_s(h_s,x_s)+\be\left[V_{s+1}(y_{s+1},X_{s+1})|X_s=x_s\right]-V_s(y_s,x_s)\big\}
\]
recursively for $t=T,T-1,\dots,0$. The optimization algorithm is based on the following path-wise dynamic programming principle.
\begin{align} 
F_T(y,x_{\cdot})&=0,\nonumber\\ 
F_t(y,x_{\cdot})&=\sup_{\pi\in\cp_{K,y,t}}\left\{ \sum_{s=t}^{T-1} \big\{H_s(h_s,x_s)+\be\left[V_{s+1}(y_{s+1},X_{s+1})|X_s=x_s\right]-V_s(y_s,x_s)\big\}\right\}\nonumber\\
&=\sup_{h\in K_t(y,x_t)}\Big\{ H_t(h,x_t)+\be\left[V_{t+1}(y-h,X_{t+1})|X_t=x_t\right]\nonumber\\
& \ \ \ \ \ \ \ \ \ \ \ \ \ \ \ \ \ \ -V_t(y,x_t)+F_{t+1}(y-h,x_{\cdot})\Big\},\label{eq_PWDynamicProgrammingPrinciple}
\end{align}
and 
\begin{equation}
\Vup_0(y,x) =
V_0(y,x) + \be\left[F_0(y,X_{\cdot})|X_0=x\right].
\label{eq_DualRepresentationWithDynamicProgramming}
\end{equation}
Based on \eqref{eq_PWDynamicProgrammingPrinciple} and \eqref{eq_DualRepresentationWithDynamicProgramming}, we are now in a position to formulate the following algorithm.


\begin{alg}\label{Main_Alg}
Generate $N$ independent trajectories $x^i_{\cdot}$, $i=1,\dots,N$ of the process $X$ started at a fixed $X_0$. For $i=1,\dots,N$
\begin{enumerate}
\item Set $t=T$, and define $y\mapsto\widehat{F}_T(y,x^i_{\cdot})=0$. 
\item\label{_step2} Set $t-1\to t$.
\item\label{_step3} Define a finite gird $\cg^y_t\subseteq \text{Dom}(\widehat{F}_{t}(\cdot,x^i_{\cdot}))\subseteq\br^l$ (see Remark \ref{_rem_dual_version}), and for each $y\in\cg^y_t$ solve the optimization problem
\begin{align*} 
\overline{F}_t(y,x^i_{\cdot})&=
\sup_{
\begin{smallmatrix}
y-h\in\text{Dom}(\widehat{F}_{t+1}(\cdot,x^i_{\cdot}))\\
h\in K_t(y,x^i_t)
\end{smallmatrix}
}
\Big\{ H_t(h,x^i_t)+\be\left[V_{t+1}(y-h,X_{t+1})|X_t=x^i_t\right]\\
& \ \ \ \ \ \ \ \ \ \ \ \ \ \ \ \ \ \ \ \ \ \ \ \ -V_t(y,x^i_t)+\widehat{F}_{t+1}(y-h,x^i_{\cdot})\Big\}
\end{align*}
\item\label{_step4} Given the set $\{ (y,\overline{F}_t(y,x^i_{\cdot}))| y\in \cg^y_t\}$, define (interpolate) $\widehat{F}_t(\cdot,x^i_{\cdot})$ on the whole domain $\text{Dom}(\widehat{F}_{t}(\cdot,x^i_{\cdot}))$ (see Remark \ref{_rem_dual_version}).  
\item If $t\ge 1$, continue with \ref{_step2}, otherwise finish. 
\end{enumerate}
Once $y\mapsto \widehat{F}_0(y,x^i_{\cdot})$ is defined for all $i=1,\dots,N$, we approximate $\Vup_0(y,X_0)$ by the Monte-Carlo average
\begin{equation*}
V_0(y,X_0) + \tfrac{1}{N}\sum_{i=1}^N \widehat{F}_0(y,x^i_{\cdot}).
\end{equation*}
\end{alg}

Clearly, the main challenge in the implementation of Algorithm \ref{Main_Alg} is the solution of the optimization problem in step 3.

\begin{rem}\label{_rem_dual_version}{\rm 
The particular implementations of the above algorithm differ in 
\begin{enumerate}[(i)]
\item the specification of the domain $\text{Dom}(\widehat{F}_{t}(\cdot,x^i_{\cdot}))$,
\item the definition of $\cg^y_t$, 
\item the approximation of the solution to the optimization problem in point \ref{_step3} of the algorithm,
\item and the method applied in point \ref{_step4} of the algorithm.
\end{enumerate}
} 
\end{rem}

Two possible versions of Algorithm \ref{Main_Alg} are presented in Sections \ref{_sec_fy1} and \ref{_sec_fy2}.

\subsubsection{Implementation I: Discretization of the control}\label{_sec_fy1}
One possible approach is to discretized the problem in the control. We define $\cg^y_0$ as an (equidistant) 
grid contained in the set of initial control values of interest. Then, recursively for $t=0,\dots,T-1$, we define 
$\cg^y_{t+1}$ to be an (equidistant) grid contained in the set
\[
\{
y+h | y\in\cg^y_{t}, \ h\in K_t(y,x_t)
\}.
\]
Furthermore, $\text{Dom}(\widehat{F}_{t}(\cdot,x^i_{\cdot}))$ is defined to be the same as $\cg^y_t$;
this specification implies that the optimization problem in step \ref{_step3} of Algorithm \ref{Main_Alg} is an 
optimization over a finite set; moreover, $\overline{F}_t(\cdot,\cdot)=\widehat{F}_t(\cdot,\cdot)$ for $t=0,\dots,T$. 

\begin{rem}\label{rem_YGrid}{\rm 
The choice of $\cg^y_t$ depends on the constraints of the problem. For instance, in the case of the gas 
storage problem, there is a well defined lower and upper limit of $y$; $\cg^y_t$ can be an equidistant grid 
in this region. }
\end{rem}

\subsubsection{Implementation II: Parametric curve fitting}\label{_sec_fy2}
Here, we define $\cg^y_t$ similarly to the previous version. However, we assume $\widehat{F}_t(\cdot,\cdot)$ to be a parametric surface of the following form.
\[
\widehat{F}_t(y,x^i_{\cdot})=\sum_{r=1}^R \lambda^i_{t,r}\phi_r(y)
\]
for some vector of parameters $\Lambda^i_t=(\lambda^i_{t,1},\dots,\lambda^i_{t,R})$ depending on the 
trajectory $x^i_{\cdot}$ and for some set of test functions $(\phi_1,\dots,\phi_R)$ with domains in $\br^l$, implying
\[
\text{Dom}(\widehat{F}_{t}(\cdot,x^i_{\cdot}))
=
\bigcap_{r=1}^R \text{Dom}(\phi_r).
\]
The accuracy of the algorithm is sensitive to the choice of test functions; more specifically, different settings 
may have different optimal sets of test functions, and the (numerical) solution of the optimization problem 
in step \ref{_step3} of Algorithm \ref{Main_Alg} should be adapted to the particular choice of test functions. 

Point \ref{_step4} of Algorithm \ref{Main_Alg} is implemented via a least squares regression, i.e., we define $\Lambda^i_t$  
to minimize the expression
\[
\sum_{y\in\cg^y_t} \left[ \overline{F}_t(y,x^i_{\cdot}) - \sum_{r=1}^R \lambda^i_{t,r}\phi_r(y) \right]^2.
\]

\begin{rem}{\rm 
As we increase the number $R$ of independent test functions and the number $N$ of simulated trajectories,
we anticipate that $\widehat{F}_t(\cdot,\cdot)$ converges to $F_t(\cdot,\cdots)$ for $t=0,\dots,T$. 
However, $\widehat{F}_t(\cdot,\cdot)$ is likely to estimate $F_t(\cdot,\cdots)$ from below, and, therefore, our 
method is likely to result in a low-biased estimate of the dual formulation based upper bound.}
%
\end{rem}

\subsection{An a priori estimate}\label{_seq_num_lower_bound}

As stated at the beginning of Section \ref{_seq_num_upper_bound}, the solution of \eqref{eq_PWDynamicProgrammingPrinciple} requires computable functions
$V_t(y,x)$ and 
\[
G_t(y,x):=\be\left[V_{t+1}(y,X_{t+1})|X_t=x\right]
\]
approximating $V^{K,*}_t(y,x)$ and $\be\left[V^{K,*}_{t+1}(y,X_{t+1})|X_t=x\right]$, respectively, for $t=0,\dots,T-1$. 
We suggest the following method, which is based on the dynamic programming formulation.

\begin{defn}(Dynamic Programming Formulation)
\begin{align}
V^{K,*}_t(y,x) := \left\{
\begin{array}{ll}
\sup_{h\in K_T(y,x)} H_T(h,x) &  \text{ if } t=T,\\
\sup_{h\in K_t(y,x)} \left\{H_t(h,x) + \be\left[V^{K,*}_{t+1}(y-h,X_{t+1})|X_t=x\right]\right\} & \text{ if } 0\le t\le T-1.
\end{array}
\right.
\label{eq_MainDynamicProgrammingFromulation}
\end{align}
\end{defn}

For the computation of the conditional expectation in the above formulation, we introduce a slightly extended version of the standard least squares regression based Monte Carlo method \cite{LonSchw,VanRoyTsits,ClemLamPro}. 
Our construction yields an a priori estimate $V_0(\cdot,\cdot)$ that approximates $V^{K,*}_0(\cdot,\cdot)$ for a bounded 
set $S$ of initial $X_0$ values, where $S$ is contained in the support of the law of $X_0$. 

\begin{alg}\label{Main_Alg_2}
Define a set $\cg^x_0$ of distinct initial values of $X$ in $S$ (see Section \ref{_sec_practical_implementation_XYregression}) and generate independent trajectories $x^i_{\cdot}$, $i=1,\dots,N$, of the process $X$, with $x^i_0=x$ for each $x\in\cg^x_0$. 
Define $\cg^x_t= \{ x^i_t \ | \ 1 \le i \le N\}$ for $t=1,\dots,T$, where $N=|\cg^x_0|$. Furthermore, for $t=1,\dots, T$, define a finite set 
\[
\cg^{yx}_t\subseteq \text{Dom}(V_t(\cdot,\cdot))\subseteq \br^l\times \br^d 
\]
such that, for all $(y,x)\in\cg^{yx}_t$, we have $x\in\cg^x_t$ (see Remark \ref{_rem_versions_of_XYregression}). 
Then, proceed as follows.
\begin{enumerate}
\item Set $t=T$ and define 
\[
V_T(y,x) = \sup_{h\in K_T(y,x)} H_T(h,x). 
\]
\item\label{_apriori_step2} Set $t-1\to t$.
\item\label{_apriori_step3} Given the set 
\[
\left\{\big(y,x_t,x_{t+1},V_{t+1}(y,x_{t+1})\big)\ | \ (y,x_{t+1})\in\cg^{yx}_{t+1}\right\},
\]
define a function $\widehat{G}_t(y,x)$ approximating $G_t(y,x)$ on $\text{Dom}(\widehat{G}_t(\cdot,\cdot))$ (see Remark \ref{_rem_versions_of_XYregression}). 
\item\label{_apriori_step4} For each $(y,x)\in\cg^{yx}_t$, solve the optimization problem
\begin{equation}
\overline{V}_t(y,x) = \sup_{
\begin{smallmatrix}
y-\hat{y} \in K_t(y,x) \\
(\hat{y},x) \in \text{Dom}(\widehat{G}_t(\cdot,\cdot))
\end{smallmatrix}
}
\left\{
H_t(y-\hat{y},x)+ \widehat{G}_t(\hat{y},x)
\right\}.
\label{_eq_apriori_optimisation}
\end{equation}
\item\label{_apriori_step5} $\overline{V}_t(\cdot,\cdot)$ is only defined on $\cg^{yx}_t$. Given the set
\[
\left\{\big(y,x,\overline{V}_{t}(y,x)\big)\ | \ (y,x)\in\cg^{yx}_t\right\},
\]
define the function $V_t(\cdot,\cdot)$ on its domain (see Remark \ref{_rem_versions_of_XYregression}). 
\item If $t\ge 1$, then continue with step $2$; else, $V_0(y,x)$ results in an a priori approximation. 
\end{enumerate}
\end{alg}

The above outline of Algorithm \ref{Main_Alg_2} leaves some choice as to how certain things are done in detail; in particular, this includes the following points.

\begin{rem}\label{_rem_versions_of_XYregression}{\rm
The particular implementations of Algorithm \ref{Main_Alg_2} differ in 
\begin{enumerate}[(i)]
\item the construction of the set $\cg^{yx}_t$, 
\item the construction of the function $\widehat{G}_t(\cdot,\cdot)$ in step \ref{_apriori_step4}, 
\item and the construction of the function $V_t(\cdot,\cdot)$ in step \ref{_apriori_step5}. 
\end{enumerate}
In Section \ref{_sec_practical_implementation_XYregression}, we implement a particular version of Algorithm \ref{Main_Alg_2} for the a priori estimate. }
\end{rem}

\subsubsection{Choosing an implementation}\label{_sec_practical_implementation_XYregression}
Since the three items described in Remark \ref{_rem_versions_of_XYregression} are closely connected, we discuss them together. 


In a similar way to the classic least squares regression based approach (cf.\,\cite{LonSchw,VanRoyTsits,ClemLamPro}), we 
approximate the function $G_t(\cdot,\cdot)$ by an orthogonal projection onto a function space spanned by a 
set of test functions 
$\{\psi_1,\dots,\psi_Q\}$, where, for $q=1,\dots,Q$, $\psi_q$ is defined on $\text{Dom}(G_t(\cdot,\cdot))\subseteq \br^l\times\br^d$, and
\begin{equation}
\widehat{G}_t(y,x)=\sum_{q=1}^Q\gamma_{t,q}\psi_q(y,x)\approx G_t(y,x)=\be\left[V_{t+1}(y,X_{t+1})|X_t=x\right].
\label{_eq_xyRegression}
\end{equation}
In contrast to \cite{LonSchw,VanRoyTsits,ClemLamPro}, where the orthogonal projection at time $t$ is determined by the distribution of $X_t$, we have to deal with the control variable as well. 
We define the projection to minimize
\begin{equation}
\be_{Y,Z}\left[\left(\be[V_{t+1}(Y,X_{t+1})|X_t=Z]-\sum_{q=1}^Q\gamma_{t,q}\psi_q(Y,Z)\right)^2\right],
\label{_eq_XYreg_error}
\end{equation}
where $Z$ and $Y$ are independent random variables. In most applications, the set 
reachable by $y_{t+1}$ is bounded,
and, therefore, in order to try to obtain uniform accuracy across the reachable set, we will take $Y$ to be uniformly 
distributed on this bounded set.  The distribution of $Z$ can be defined to coincide with the distribution of $X_t$.
However, in many applications, such as the numerical example in Section \ref{_sec_numerical_results}, only the
distribution of $X_t$ conditioned on particular values of $X_0$ is specified; here, we assume 
that the law of $X_0$ is uniform on a certain set $S$. 


Formula \eqref{_eq_XYreg_error} suggests that, by increasing the number of appropriately chosen test functions $\psi_1,\psi_2,\dots$, $\widehat{G}_t(\cdot,\cdot)$ approximates the conditional expectation
\begin{equation*}
(y,x)\mapsto \be[V_{t+1}(y,X_{t+1})|X_t=x]
\end{equation*}
in the mean square sense with respect to the joint measure of $Y$ and $Z$; for our particular choice of test functions, see Section \ref{_sec_results_APrioriEstimate}.

In order to determine the regression coefficients $\gamma_{t,q}$ for $q=1,\dots,Q$, we observe that, when \eqref{_eq_XYreg_error} is minimized, we have
\begin{align*}
& \frac{\partial}{\partial \gamma_{t,r}} \be_{Y,Z}\left[\left(\be[V_{t+1}(Y,X_{t+1})|X_t=Z]-\sum_{q=1}^Q\gamma_{t,q}\psi_q(Y,Z)\right)^2\right]  \\
& \ \ \ \ = 2\be_{Y,Z}\big[\be[V_{t+1}(Y,X_{t+1})|X_t=Z]\psi_r(Y,Z)\big] 
		 -2\be_{Y,Z}\left[\sum_{q=1}^Q\gamma_{t,q}\psi_q(Y,Z)\psi_r(Y,Z)\right]\\
& \ \ \ \ = 2\be_{Y,Z}\big[V_{t+1}(Y,X_{t+1})\psi_r(Y,Z)\big] 
		 -2\sum_{q=1}^Q\gamma_{t,q}\be_{Y,Z}\left[\psi_q(Y,Z)\psi_r(Y,Z)\right]=0.
\end{align*}
Hence, $\gamma_t=(\gamma_{t,1},\dots,\gamma_{t,Q})^T$ satisfies the linear equation
\begin{equation}
B_{V,\psi}=B_{\psi}\gamma_t,
\label{_eq_gamma_equation}
\end{equation}
where
\[
B_{V,\psi}=\be_{Y,Z}\big[V_{t+1}(Y,X_{t+1})\psi(Y,Z)\big]
\]
and
\[
B_{\psi}=\be_{Y,Z}\big[\psi(Y,Z)\psi(Y,Z)^T\big]
\]
for $\psi(x,y)=(\psi_1(x,y),\dots,\psi_Q(x,y))^T$.

When estimating the regression coefficients, we replace $B_{V,\psi}$ and $B_{\psi}$ in \eqref{_eq_gamma_equation} with their Monte-Carlo estimates
\begin{align}
\widehat{B}_{V,\psi}&:=\tfrac{1}{|\cg^{yx}_{t+1}|}\sum_{(y,x_{t+1})\in\cg^{yx}_{t+1}}V_{t+1}(y,x_{t+1})\psi(y,x_t)\label{_eq_BV}\\
\text{and}\quad\widehat{B}_{\psi}&:=\tfrac{1}{|\cg^{yx}_{t+1}|}\sum_{(y,x_{t+1})\in\cg^{yx}_{t+1}}\psi(y,x_t)\psi(y,x_t)^T.\label{_eq_Bpsi}
\end{align}

The choice of $\cg^x_0$ and $\cg^{yx}_t$, $t=0,\dots,T$, determines how accurately $\widehat{B}_{V,\psi}$ and $\widehat{B}_{\psi}$ approximate $B_{V,\psi}$ and $B_{\psi}$, respectively. When implementing the method, we consider
\begin{enumerate}[i)]
\item $\cg^x_0$ to be randomly sampled from the law of $X_0$, or $\cg^x_0$ to be a low discrepancy sequence in $S$,
\item $x_t$ to be randomly sampled from the conditional distribution $X_t|X_0$,
\item and $y$ to be independent of $x_t$ and randomly sampled from the uniform distribution on the support of $y_t$, or to be a low discrepancy sequence\footnote{We tested rank-$1$ lattices, see Section \ref{_sec_numerical_results}.} in the support of $y_t$; we generated a small number ($1$ to $10$) of $y$ items for each $x$. 
\end{enumerate}

\begin{rem}\label{_rem_xyGrid}{\rm
Initially, we looked at defining $\cg^{yx}_t=\cg^y_t\times\cg^x_t$, for some set $\cg^y_t$. However, the numerical results showed that to achieve a given accuracy, a large enough 
$\cg^y_t$ is required, resulting in a set $\cg^y_t\times\cg^x_t$ significantly larger than the size of $\cg^{yx}_t$ constructed in the version described prior to this remark (calibrated to yield the same accuracy). 

Figure \ref{fig:xygirds} demonstrates the difference between $\cg^y_t\times\cg^x_t$ and the set $\cg^{yx}_t$ described before this remark. 
We observe that $\cg^{yx}_t$ yields a better coverage with fewer grid points. }
\end{rem}

\begin{figure}[ht]
\centering
\subfigure[$\cg^{yx}_t$ grid, y points generated rank-1 lattice rule ($4000$ points in total, $8$ $y$-items per each $x$ item)]{
\includegraphics[trim = 32mm 72mm 28mm 83mm, clip, width=0.47\textwidth]{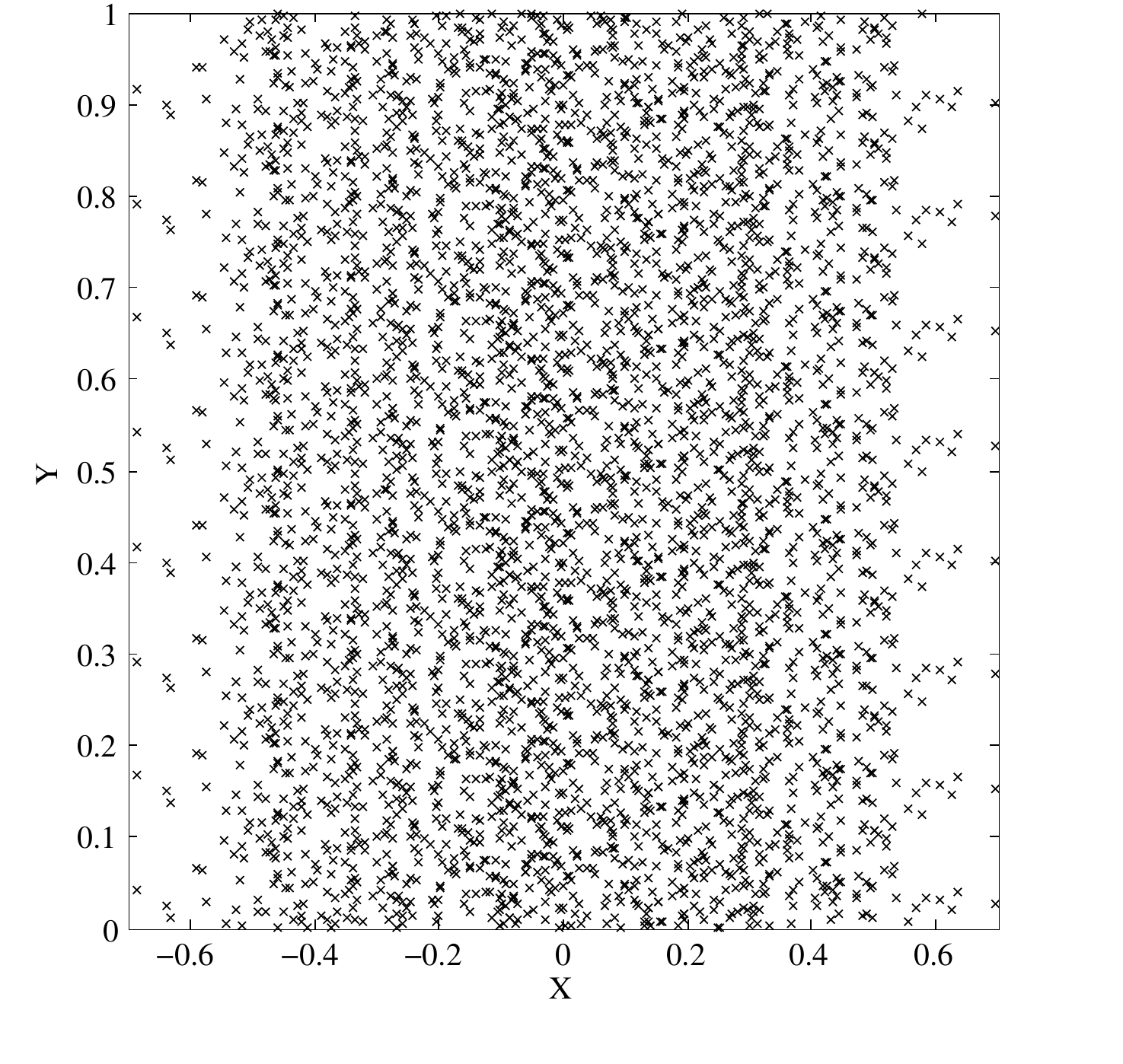}
\label{fig:xygrid1}
}
\subfigure[$\cg^{yx}_t=\cg^y_t\times\cg^x_t$ ($12500$ points in total, $25$ $y$-items per each $x$ item)]{
\includegraphics[trim = 32mm 72mm 28mm 83mm, clip, width=0.47\textwidth]{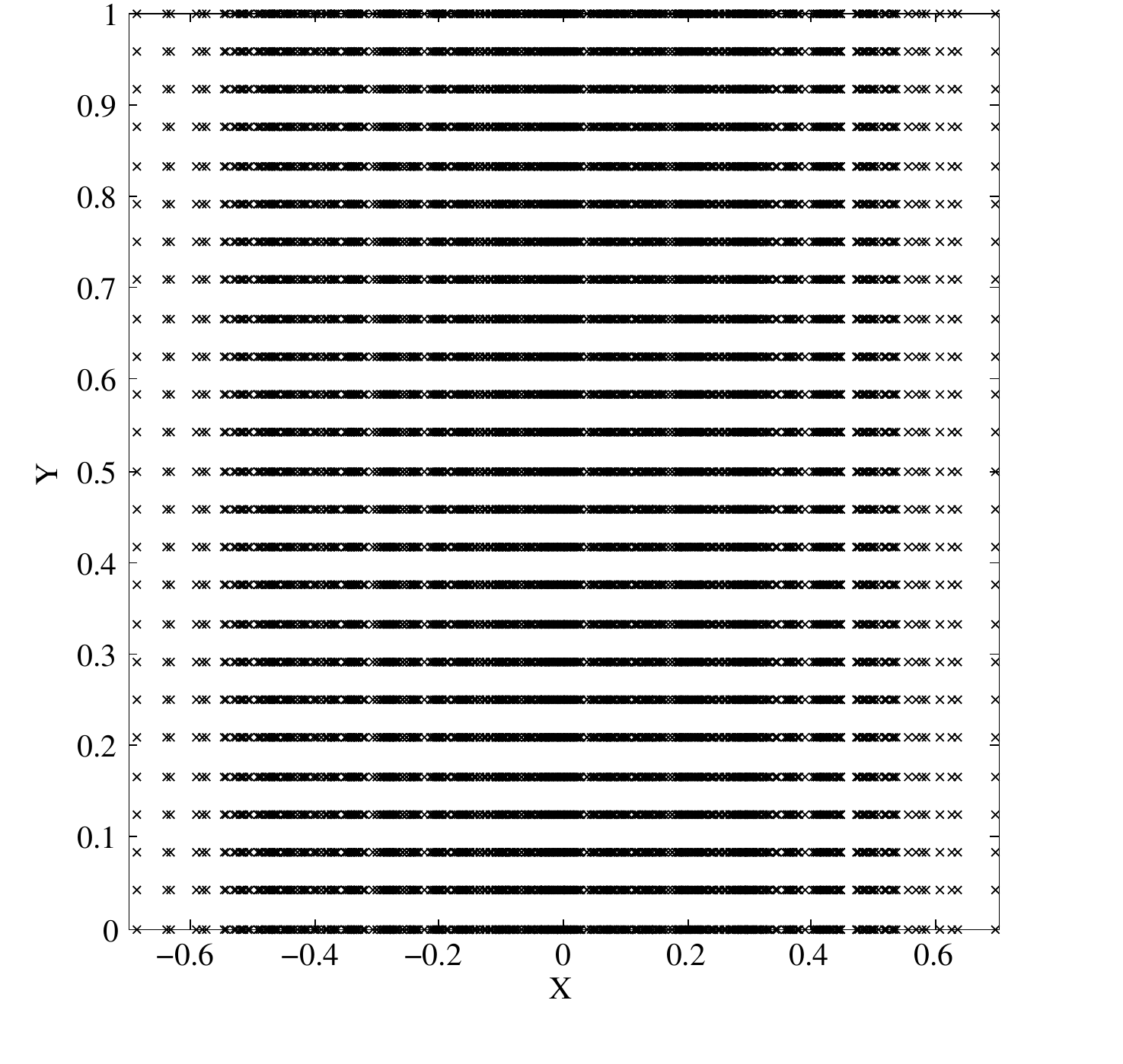}
\label{fig:xygrid2}
}
\caption{Different constructions of $\cg^{yx}_t$ based on the same $\cg^x_t$, assuming equidistant $\mathcal{G}^x_0\subset[-0.5,0.5]$ grid and Gaussian conditional distribution $(X_t|X_0)$.}
\label{fig:xygirds}
\end{figure}

What remains is to be specified are the particulars of step \ref{_apriori_step5} of Algorithm \ref{Main_Alg_2}, i.e., to define $V_t(\cdot,\cdot)$ given
\[
\left\{\big(y,x,\overline{V}_{t}(y,x)\big)\ | \ (y,x)\in\cg^{yx}_t\right\}.
\]
To do this, one can use interpolation, or one can fit a parametric surface to the graph of $\overline{V}_{t}(\cdot,\cdot)$; we consider the parametric representation
\[
V_t(y,x) = 	\sum_{q=1}^Q\beta_{t,q}\psi_q(y,x),
\]
choosing $\beta_{t,q}$, $q=1,\dots,Q$, to minimize the mean square error
\begin{equation*}
\sum_{(y,x)\in\cg^{yx}_t}\left(\overline{V}_{t}(y,x)-\sum_{q=1}^Q\beta_{t,q}\psi_q(y,x)\right)^2,
\end{equation*}
i.e., we define $V_t(\cdot,\cdot)$ by another least squares regression.

\begin{rem}{\rm 
For the numerical computation of the dual formulation based approach, to ensure that we have an upper bound, it is 
essential that the estimate of 
\begin{equation}
V_t(y,x)-\mathbb{E}[V_{t+1}(y,X_{t+1})|X_t=x]
\label{_eq_martingaleIncrement}
\end{equation}
is a martingale increment, that is it has zero expectation.

As introduced in this section, the function
\begin{equation}
V_t(y,x)-\widehat{G}_t(y,x)
\label{_eq_biasedMartingaleIncrement}
\end{equation}
is a biased estimate of \eqref{_eq_martingaleIncrement}. When \eqref{_eq_biasedMartingaleIncrement} is used 
for computing the dual upper bound - in particular, when $\widehat{G}_t(\cdot,\cdot)$ is a poor estimate of 
$G_t(\cdot,\cdot)$ - the error due to this bias might be larger than the statistical error. 

In such cases, one can replace $\widehat{G}_t(\cdot,\cdot)$ with the following estimate
\[
\widetilde{G}_t(y,x)=\tfrac{1}{L}\sum_{i=1}^LV_{t+1}(y,X_{t+1}^{(i)})
\]
where $\{X_{t+1}^{(i)}$, $i=1,\dots,L\}$ for some $L$ is an i.i.d. sample from the conditional distribution of 
$(X_{t+1}|x)$. }
\end{rem}

\subsubsection{Other choices in the implementation}

Multivariate regression similar to \eqref{_eq_xyRegression} has been mentioned in 
\cite{BooJon}. However, \cite{BooJon} does not pursue the same route as presented above, but rather restricts
attention to a least squares regression that uses test functions depending only on the underlying factor $X$, and, for 
each value of $y$ in a finite set $\cg^y\subset\br^l$, a separate simpler regression is computed. 
The extension of $\overline{V}_t(\cdot,\cdot)$ to the whole domain of $V_t(\cdot,\cdot)$ is not considered 
(step \ref{_apriori_step5} in Algorithm \ref{Main_Alg_2}). 
\cite{BooJon} restricts the optimization problem \eqref{_eq_apriori_optimisation} in step \ref{_apriori_step5} to $\cg^y$. 

Computing regressions which are based on test functions depending only on $X$ is less expensive than a regression 
with high number of $(y,X)$-dependent test functions (see Section \ref{_sec_results_APrioriEstimate} for the 
implementation of the a priori method). However, for accurate estimates, a fine grid $\cg^y$ is required, and, hence, 
a high number of simple regressions needs to be computed.  
With carefully chosen $\mathcal{G}^{yx}_t$-grid (see Remark \ref{_rem_xyGrid}) and a suitable set of 
$(y,X)$-dependent test functions, our version attains the same accuracy at significantly lower cost.


\subsubsection{A note on low biased methods}\label{_sec_lowBiased}
The a priori estimates $V_t(y,x)$ and $\mathbb{E}[V_{t+1}(y,X_{t+1})|X_t=x]$, described above, typically result in a high biased estimate of the value function. However, the outcome of the a priori method can be applied to generate a low biased estimate. 
By definition, for any $y_0\in Y_0^K$ and for any policy $\pi\in\mathcal{P}_{K,y_0,0}$, $V_0^{K,\pi}(y_0,\cdot)$ is a low biased estimate of $V^{K,*}_0(y_0,\cdot)$. 

The a priori estimate generates a policy as follows. For $t\in\{0,\dots,T-1\}$, a reachable pair $(y,x)$, and $\epsilon>0$, there exists at least one value $\hat{h}_t\in K_t(y,x)$ that satisfies
\begin{align}
& H_t(\hat{h}_t,x)+\mathbb{E}[V_{t+1}(y-\hat{h}_t,X_{t+1})|X_t=x]\nonumber\\
\ & \sup_{h\in K_t(y,x)}\left\{H_t(h,x)+\mathbb{E}[V_{t+1}(y-h,X_{t+1})|X_t=x]\right\}-\epsilon.
\label{_eq_lowBiasedPolicy}
\end{align}
Given a starting value $y_0\in Y_0^K$, and assuming that the supremum exists almost surely for reachable pairs,
\eqref{_eq_lowBiasedPolicy} determines a exercise policy $\hat{\pi}=(\hat{h}_0,\dots,\hat{h}_T)\in\mathcal{P}_{K,y_0,0}$.
As $\epsilon$ approaches $0$, $V^{K,\hat{\pi}}_0(\cdot,\cdot)$ converges to $V^{K,*}_0(\cdot,\cdot)$. This convergence result motivates the following low-biased algorithm.


\begin{alg}\label{Main_Alg_3}
Fix $y_0\in Y_0^K$ and $\epsilon>0$. Generate $N$ independent trajectories $x^i_{\cdot}$, $i=1,\dots,N$, of the process $X$ started at a fixed $X_0$. For $i=1,\dots,N$,
\begin{enumerate}
\item set $t=0$, and $V^i=0$,
\item\label{_forwardStep} for $x=x_t^i$ and $y=y_t$, find a $\hat{h}_t$ that satisfies \eqref{_eq_lowBiasedPolicy},
\item set $V^i+H_t(\hat{h}_t,x^i_t) \to V^i$.
\item and, if $t=T$, then stop; else, set $t+1\to t$, and continue with step \ref{_forwardStep}.
\end{enumerate}

Once this routine has been executed for all $i=1,\dots,N$, the Monte-Carlo average 
\[
V_0^{\downarrow}(y_0,X_0):=\tfrac{1}{N}\sum_{i=1}^N V^i
\] 
approximates (up to statistical error due to sampling variance) a low biased estimate at time $0$ for initial control value $y_0$ and initial factor value $x$.
\end{alg}

\section{Numerical results}\label{_sec_numerical_results}
In this section, we discuss the gas storage example following \cite{ThoDavRas}, and we compare the 
numerical performance of the implementation of both, the a priori method and the method based on the dual formulation. 

The gas storage problem as well as related probabilistic numerical methods have also been discussed in \cite{LudCar} and in \cite{BooJon}.

\begin{figure}[p]
\centering
\includegraphics[trim = 18mm 98mm 18mm 98mm, clip, width=0.95\textwidth]{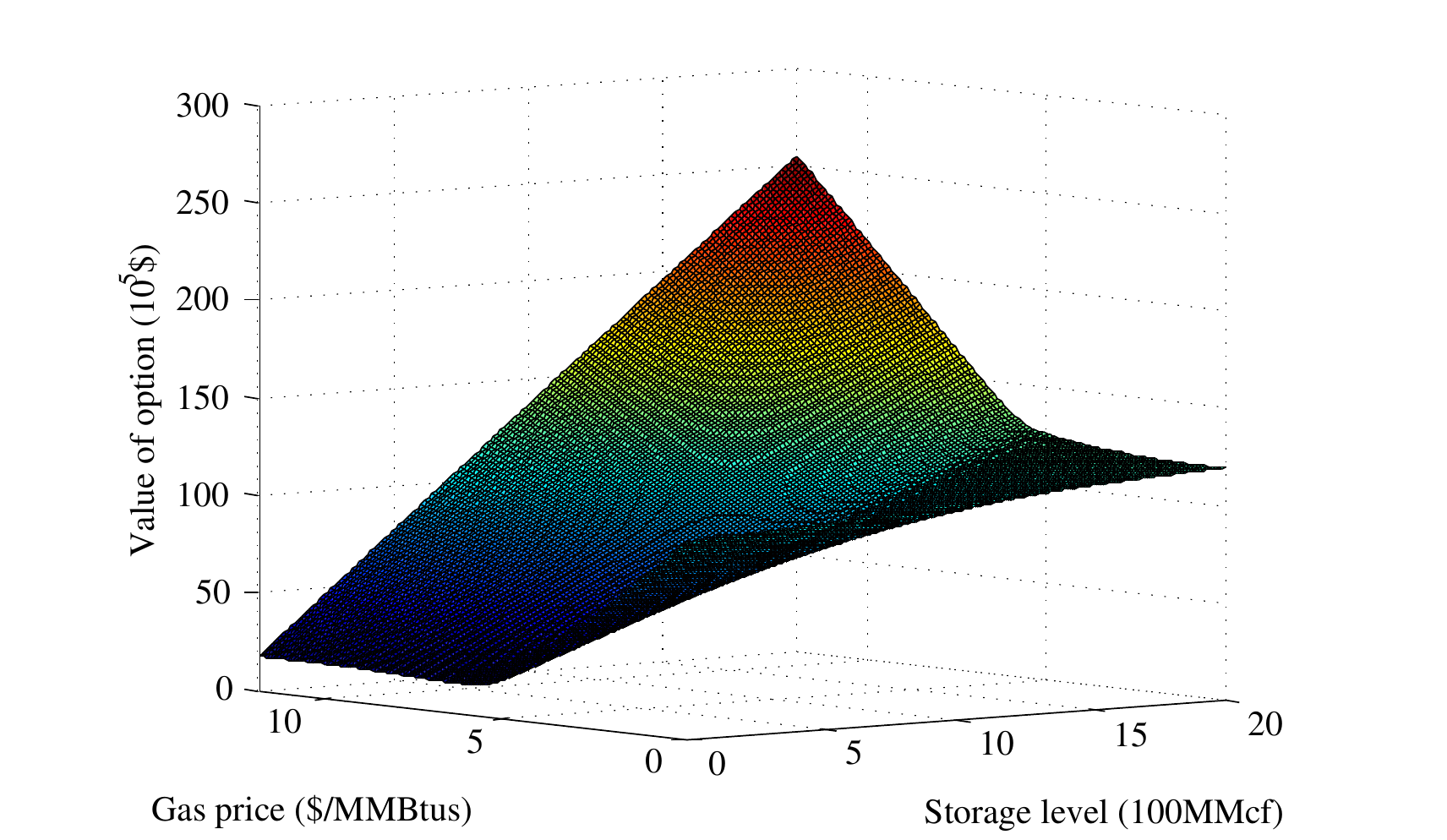}
\caption{A priori estimate of the value of option at time $0$.}
\label{fig:apriori1}
\end{figure}

\begin{figure}[p]
\centering
\includegraphics[trim = 18mm 98mm 18mm 98mm, clip, width=0.95\textwidth]{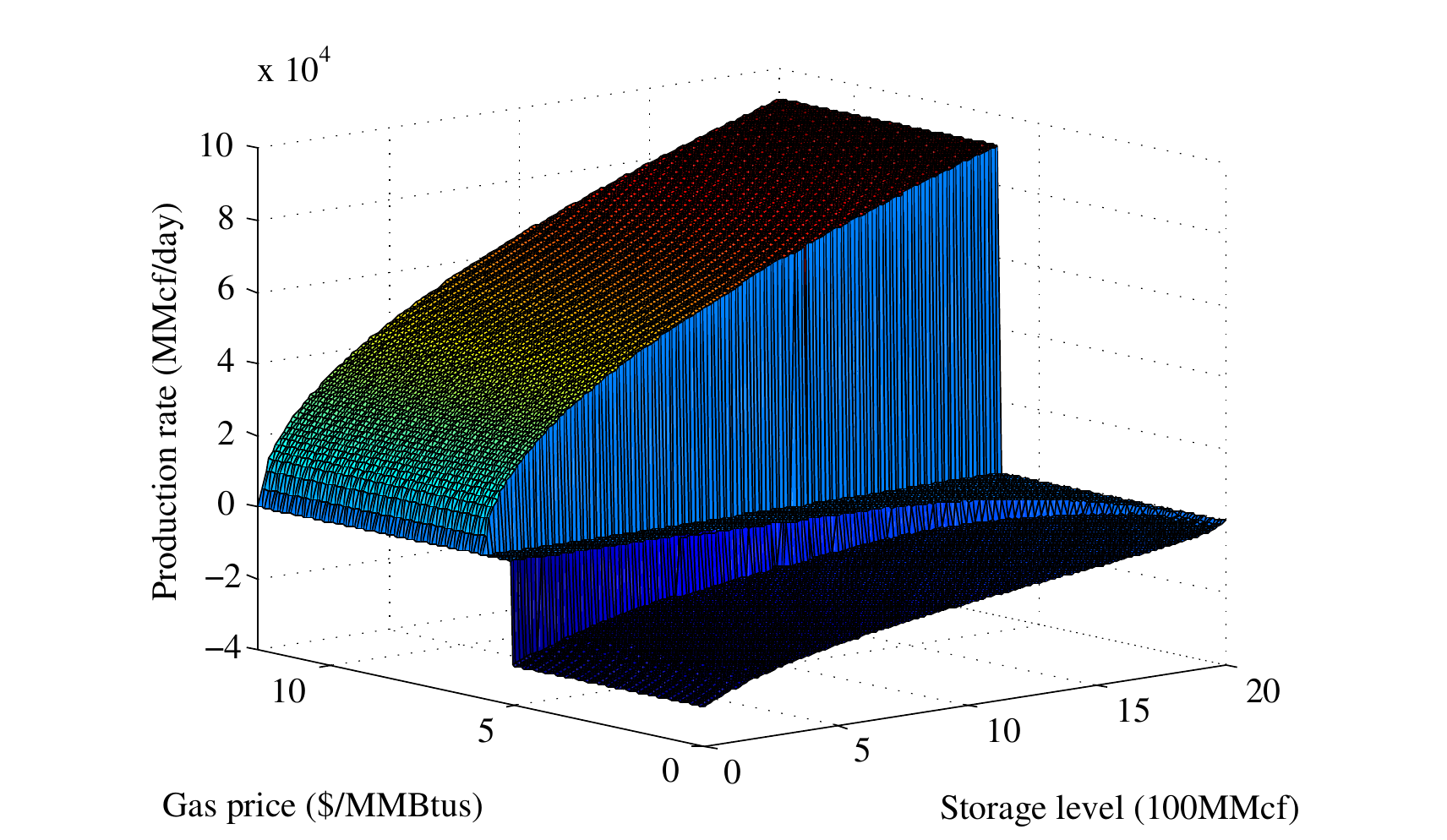}
\caption{A priori estimate of the optimal rate of production at time $0$.}
\label{fig:apriori2}
\end{figure}

\subsection{The gas storage problem}\label{_sec_gas_storage}


The natural gas storage problem addresses the optimal utilization of certain types of storage facilities. We assume relatively high deliverability and high injection rates. In particular, given the price $X_t$ of gas and the amount $y_t$ of \emph{working gas} in the inventory at time $t$, we aim to optimize the production (injection) amount for the given day, for each day over a year.  

We introduce the following notation. 
\begin{itemize}
\item $c$, the rate of production if $c>0$, or the rate of injection if $c<0$. The rate is measured in million cubic feet per day (MMcf$/$day). 
\item $y_{\text{base}}$, base gas requirement (built into the facility and cannot be removed).  
\item $y_{\max}$, the maximum storage capacity of the facility on top of the base gas level.
\item $c_{\max}(y)$, the maximum production rate at storage level $y$.
\item $c_{\min}(y)$, the maximum injection rate at storage level $y$.
\item $a(y,c)$, the rate of gas that is lost given production at rate $c>0$ or injection at rate $c<0$.
\item $r$, the discount rate. 
\end{itemize}

As in \cite{ThoDavRas}, we consider a facility with working gas capacity of $y_{\max}=2000$MMcf and with base gas requirement $y_{\text{base}}=500$MMcf. The maximum production rate (attainable at maximum capacity) is known to be $c_{\max}(y_{\max})=250$MMcf$/$day, whereas the maximum injection rate (attainable at minimum capacity) is $c_{\min}(0)=-80$MMcf$/$day. 
The facility is available for one year, and a decision on gas production/injection is made daily, i.e., $\mathcal{T}=\{0,1,\dots,365\}$. 

We assume that the loss rate satisfies
\[
a(y,c)=a(c)=\left\{
\begin{array}{cl}
0 & \text{ if } c\ge 0,\\
1.7 & \text{ if } c< 0.
\end{array}
\right.
\]

In the discrete-time formulation\footnote{In \cite{ThoDavRas}, the continuous time production/injection is described by an ordinary differential equation. The discrete-time formulation is an approximation of the solution to that ODE.}, we approximate the daily delivered/injected amount by
\begin{equation}
h_t=y_t-y_{t+1}\approx c\Delta t,
\label{_eq_daily_amount}
\end{equation}
i.e., the unit of time is assumed to be a day (including weekend days), which means $\Delta t=1$. 

The daily constraints on gas production and injection are derived from the ideal gas law and Bernuolli's law (the reader is referred to Section 3 in \cite{ThoDavRas} for details\footnote{Note that, in this paper, the time unit is daily, whereas in \cite{ThoDavRas} the time is measured in years.}). In particular,
\begin{equation}
c_{\max}(y)=C_0 \sqrt{y},
\label{_eq_max_rate}
\end{equation}
where $C_0=c_{\max}(y_{\max})/\sqrt{y_{\max}}$. 
Moreover, 
\begin{equation}
c_{\min}(y)=-C_1\sqrt{\frac{1}{y+y_{\text{base}}}+C_2},
\label{_eq_min_rate}
\end{equation}
where $C_2=-1/(y_{\max}+y_{\text{base}})$ and 
\[
C_1 = c_{\min}(0) / \sqrt{\frac{1}{y_{\text{base}}}+C_2}.
\]
Combining \eqref{_eq_max_rate} and \eqref{_eq_min_rate} with \eqref{_eq_daily_amount}, we get the 
constraint set for the amount of gas that can be produced/injected during a day:
\begin{equation}
K_t(y,x)=K(y)=
[ -\min\{c_{\min}(y)\Delta t,y_{\max}-y\}, \min\{c_{\max}(y)\Delta T,y\} ].
\label{_eq_constraints}
\end{equation}

The payoff function is defined by $H_t(\cdot,\cdot)=0$ for $t=T$, and
\begin{equation}
H_t(h_t,X_t) = \left\{
\begin{array}{lc}
e^{-rt}h_tX_t & \text{ if } h\ge 0,\\
e^{-rt}(h_t-a(h_t)\Delta t)X_t & \text{ if } h_t<0,
\end{array}
\right.
\end{equation}
for $t=0,\dots,T-1$, incorporating the value of the loss of gas at injection. 

The discount rate is assumed to be $10\%$.

In practice, gas prices are quoted in ``dollars per million British thermal units" (\$$/$MMBtus). We note that $1000$ MMBtus are roughly equivalent to $1$ MMcf. 

The calculations in \cite{ThoDavRas} are based on the gas price model
\begin{equation}
\text{d} X_t = \alpha(\beta - X_t)\text{d} t+ \gamma X_t\text{d} B_t+ (J_t-X_t)\text{d} q_t,
\end{equation}
where $t\mapsto q_t$ is a Poisson process with intensity rate $\lambda$ and independent of the Brownian motion $B_t$. Moreover, 
$J_t$ is normally distributed with mean $\mu$ and variance $\sigma^2$ independent of $B_t$ and $q_t$. In our implementation, we rescaled the parameters of \cite{ThoDavRas} to daily time-scale: $\alpha=0.25/365$, $\beta=2.5$, $\gamma=0.2/\sqrt{365}$, $\lambda=2/365$, $\mu=64$, and $\sigma^2=4$.

\begin{rem}{\rm
Since the payoff function is piece-wise linear in $h$ and the constraints sets are bounded (uniformly in $t$) for any $K$-admissible policy $\pi$, the following bounds are satisfied for all $t\in\mathcal{T}$, $x\in\mathbb{R}^+$, $y\in[0,y_{\max}]$.
\begin{multline*}
-\infty <  (-c_{\min}(0)-a(-1))\Delta t\sum_{s=t}^T\mathbb{E}[X_s|X_t=x]\\
\le
V^{\pi}_t(y,x)
\le 
c_{\max}(y_{\max})\Delta t\sum_{s=t}^T\mathbb{E}[X_s|X_t=x] < \infty.
\end{multline*}
These inequalities imply that the value function is well defined, and the dynamic programming principle holds for this particular formulation of the gas storage problem. }
\end{rem}

\subsection{The a priori estimate}\label{_sec_results_APrioriEstimate}
We computed the a priori estimate as follows. 

First, we ran the method using an equidistant initial grid $\mathcal{G}^x_0$ in the price region $[0,12]$ of interest (\cite{ThoDavRas} presents results in this price interval). However, we found that the absolute value of the second derivative of $V_0(\cdot,\dot)$ with respect to gas price was large in the price interval $[5,7]$, and close to zero otherwise; therefore, we decided to refine the grid in the middle region. In particular, we chose an initial grid $\mathcal{G}^x_0$ that had $2500$ equidistant points in the interval $[0,5]$, $5000$ equidistant points in $[5,7]$, and $2500$ equidistant points on $[7,12]$. 

\begin{table}
\begin{center}
\begin{tabular}{c|c|c|c|}
& \multicolumn{3}{c|}{Range of} \\
$X_0$ & $V_0^{\uparrow}(y_0,X_0)-V_0^{\downarrow}(y_0,X_0)$ & $\text{stdev}(V_0^{\uparrow}(y_0,X_0))$ & $\text{stdev}(V_0^{\downarrow}(y_0,X_0))$ \\
\hline 

$3$ & $[1.224,3.781]$ & $[0.115,0.121]$ & $[0.188,0.208]$ \\

$6$ & $[1.758,3.677]$ & $[0.116,0.128]$ & $[0.121,0.133]$ \\

$9$ & $[2.174,4.276]$ & $[0.060,0.076]$ & $[0.115,0.118]$

\end{tabular}
\end{center}
\caption{Comparison of high-biased and low-biased estimates: ranges of differences and ranges of estimated standard deviation over the domain $y_0\in[0,20]$ measured in \$$/$MMBtus.}
\label{tab:dualVsFwd}
\end{table}

\begin{figure}[h!]
\centering
\includegraphics[trim = 28mm 105mm 28mm 102mm, clip, width=0.75\textwidth]{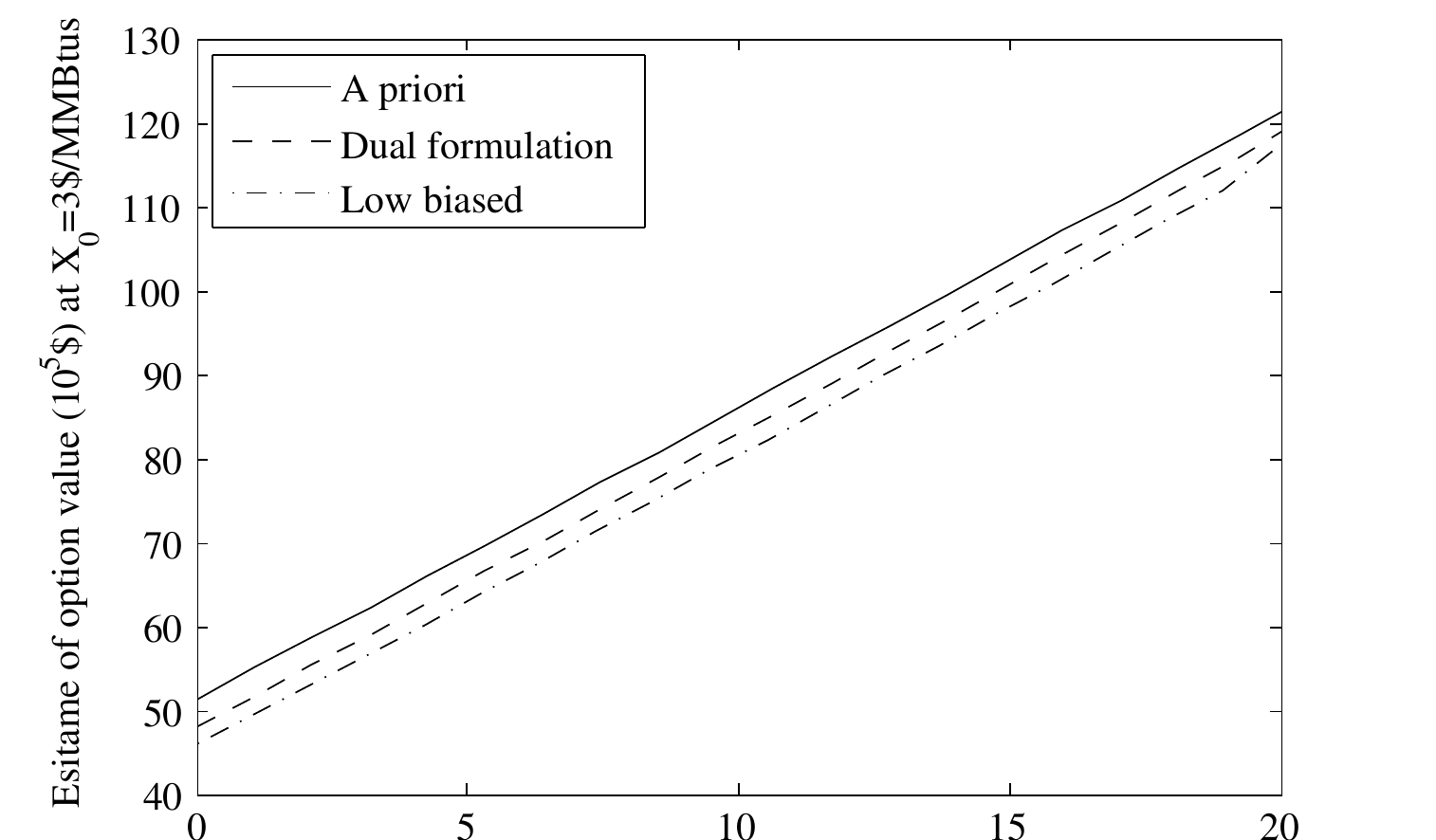}
\caption{Comparison of three methods, $X_0=3$\$$/$MMBtus.}
\label{fig:compAt3}
\end{figure}

\begin{figure}[h!]
\centering
\includegraphics[trim = 28mm 105mm 28mm 102mm, clip, width=0.75\textwidth]{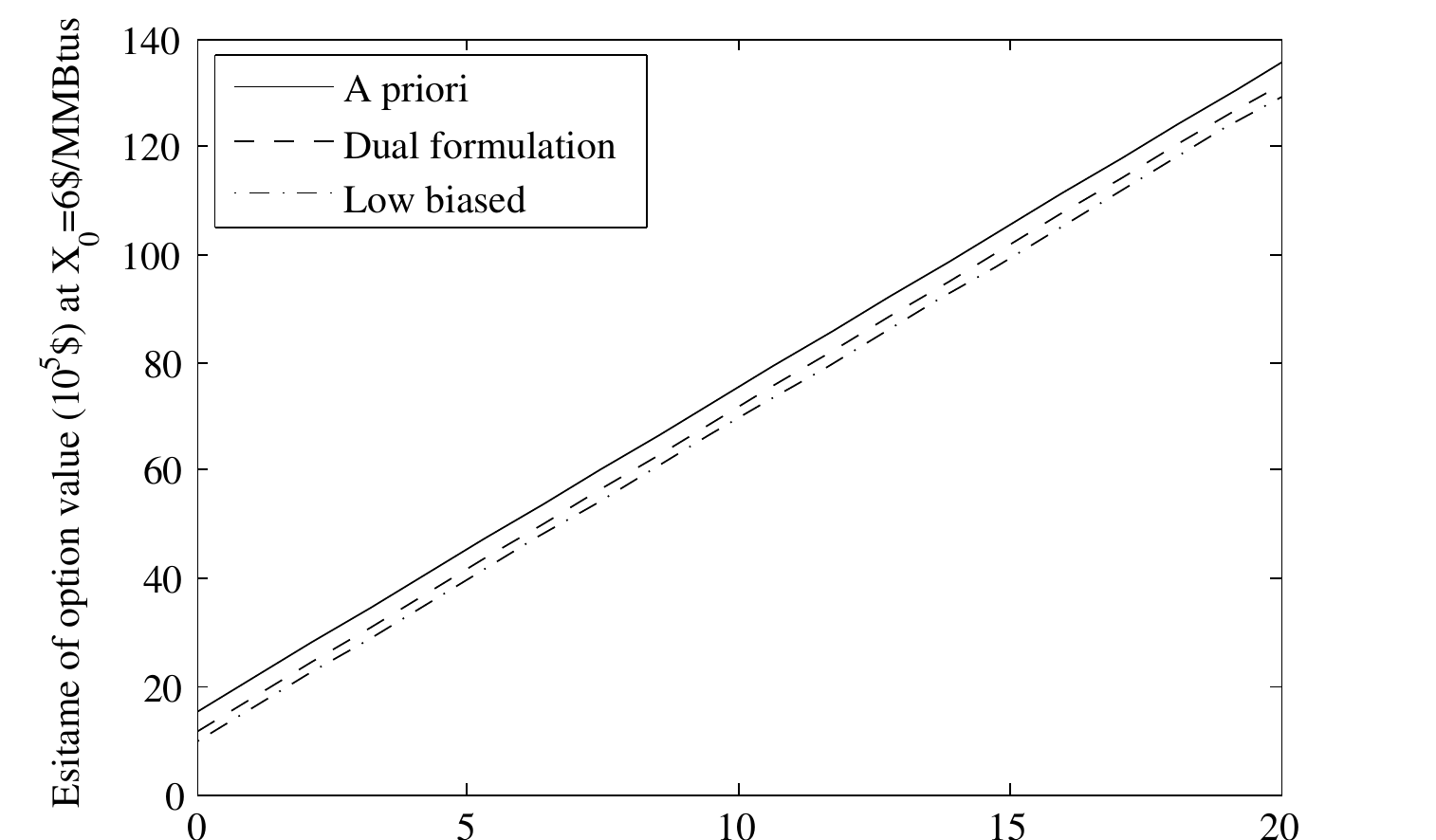}
\caption{Comparison of three methods, $X_0=6$\$$/$MMBtus.}
\label{fig:compAt6}
\end{figure}

The gas price trajectories $x^i_{\cdot}$ for $i=1,\dots, 10000$ were simulated using the Euler time-discretisation
\[
x^i_{t+1} = x^i_t+\alpha(\beta-x^i_t)\Delta t+ \gamma x^i_t \Delta B^i_t+ (J^i_t-x^i_t)\Delta q^i_t,
\]
where $\Delta B^i_t$ are independent Brownian increments on a unit time step ($\Delta t=1$), $J^i_t$ are drawn from the 
distribution of $J_t$, and $\Delta q^i_t$ drawn from the distribution
\[
\Delta q^i_t=\left\{
\begin{array}{cl}
0 & \text{ with probability } 1-\lambda \Delta t, \\
1 & \text{ with probability } \lambda \Delta t.
\end{array}
\right.
\]

In order to generate the grid $\mathcal{G}^{yx}_t$, at each time step, we generated a low discrepancy sequence 
(using a rank $1$ lattice rule with random offset, see \cite{glasserman}) of length $|\mathcal{G}^x_t|\times N_y$, and assigned $N_y$ $y$-points to each of the elements in $\mathcal{G}^x_t$. We tested the method with $N_y=3,6,21$. 

Initially, we considered using polynomial test functions for the regression. However, we found that these test functions did not capture well neither the conditional expectation function nor the value function. Therefore, we decided to use test functions that are polynomial on patches and constant outside the patches. We partitioned the $(y,x)$ domain $[0,20]\times[0,12]$ into smaller rectangles

\begin{center}
\begin{tabular}{cc} 
$[0,10]\times[0,5]$ & $[10,20]\times[0,5]$ \\
$[0,10]\times[5,7]$ & $[10,20]\times[5,7]$ \\
$[0,10]\times[7,14]$ & $[10,20]\times[7,14]$. 
\end{tabular}
\end{center}

On each rectangle, we used the following polynomials: $1$, $x$, $y$, $x^2$, $y^2$, $xy$, $x^2y$, $y^2x$, and $x^2y^2$. In addition to these polynomials, on the patches in the second row, we also used $x^3$, $x^3y$, and $x^3y^2$. Although defining functions locally on small rectangles leads to a relatively high number of test functions, the matrix \label{_eq_Bpsi} is sparse, and the evaluation is tractable. 

In step \ref{_apriori_step4} of Algorithm \ref{Main_Alg_2}, we simply compared the outcome of three scenarios: $h=0$, $h=\min K_t(y,x)$, and $h=\max K_t(y,x)$; i.e., we assumed \emph{bang-bang controls}. We also tested replacing the supremum with the maximum over finer grids in $K_t(y,x)$; however, these tests did not result in significantly different option values.

The numerical results corresponding to $t=0$, $N_y=6$, and bang-bang controls are shown in Figures \ref{fig:apriori1} and \ref{fig:apriori2}. 
Comparing these figures to the plots on page 235 in \cite{ThoDavRas}, we find that 
our a priori method slightly overestimates the option value. Given that, in order to estimate the values at time $0$, the a priori method uses information from later times, it is likely to be a \emph{high biased} method (see comments on the least squares regression based methods in \cite{glasserman}). 

\subsection{The dual upper bound}\label{_sec_results_dual}
We implemented the version of the method based on the dual formulation as specified in Section \ref{_sec_fy2} for three different initial gas prices ($3$\$$/$MMBtus, $6$\$$/$MMBtus and $9$\$$/$MMBtus). In each case, we generated $N=10000$ gas price trajectories. For $\mathcal{G}^y_t$, we used a fixed equidistant grid in $[0,20]$ with $N_y=320$ points. 

For the parametric curve fitting component, we partitioned the control interval $[0,20]$ into three shorter intervals ($[0,7]$, $[7,14]$, and $[14,20]$), and on each small interval we used the following polynomials as test functions: $1$, $y$, $y^2$, and $y^3$. 

In order to compute the optimization in step \ref{_step3} of Algorithm \ref{Main_Alg}, we approximated the supremum with the maximum on a finite grid in $K_t(y,x)$. This grid can be chosen to be finer than $\mathcal{G}^y_t$. With other optimization techniques, even more accurate estimates can be computed. 

In order to estimate the accuracy of the method, we ran the algorithm using finer $\mathcal{G}^y_t$ grids but the same set of gas price trajectories, more test functions defined locally on finer partitions, and more accurate optimization. Since the refined specifications resulted in absolute differences that were around $10\%-15\%$ of the standard deviation 
of the results, we consider the refined estimates numerically equivalent to our reference results. 

We also computed low biased estimates following the method described in Section \ref{_sec_lowBiased} using the a priori value functions and a sample of $50000$ gas price trajectories. 

The results are given in Table \ref{tab:dualVsFwd} and plotted in Figures \ref{fig:compAt3}, \ref{fig:compAt6}, and \ref{fig:compAt9}. 
For each case ($X_0=3,6,9$), Table \ref{tab:dualVsFwd} describes the range of differences of the high-biased and low-biased estimates over the range of control $y\in[0,20]$. We also provide the range of estimated standard deviations to indicate the order of magnitude of the statistical error. We note that a conservative upper and lower bound can be computed by adding three times its standard deviation to the upper estimate and subtracting three times its standard deviation from the low-biased estimate. 

Figures \ref{fig:compAt3}, \ref{fig:compAt6}, and \ref{fig:compAt9} suggest that, in some cases, the dual formulation method based estimate results in a sharper upper bound compared to the estimates of the a priori method. The upper and lower estimates are consistent with the numerical results of \cite{ThoDavRas}. 

\begin{figure}[h!]
\centering
\includegraphics[trim = 28mm 105mm 28mm 102mm, clip, width=0.75\textwidth]{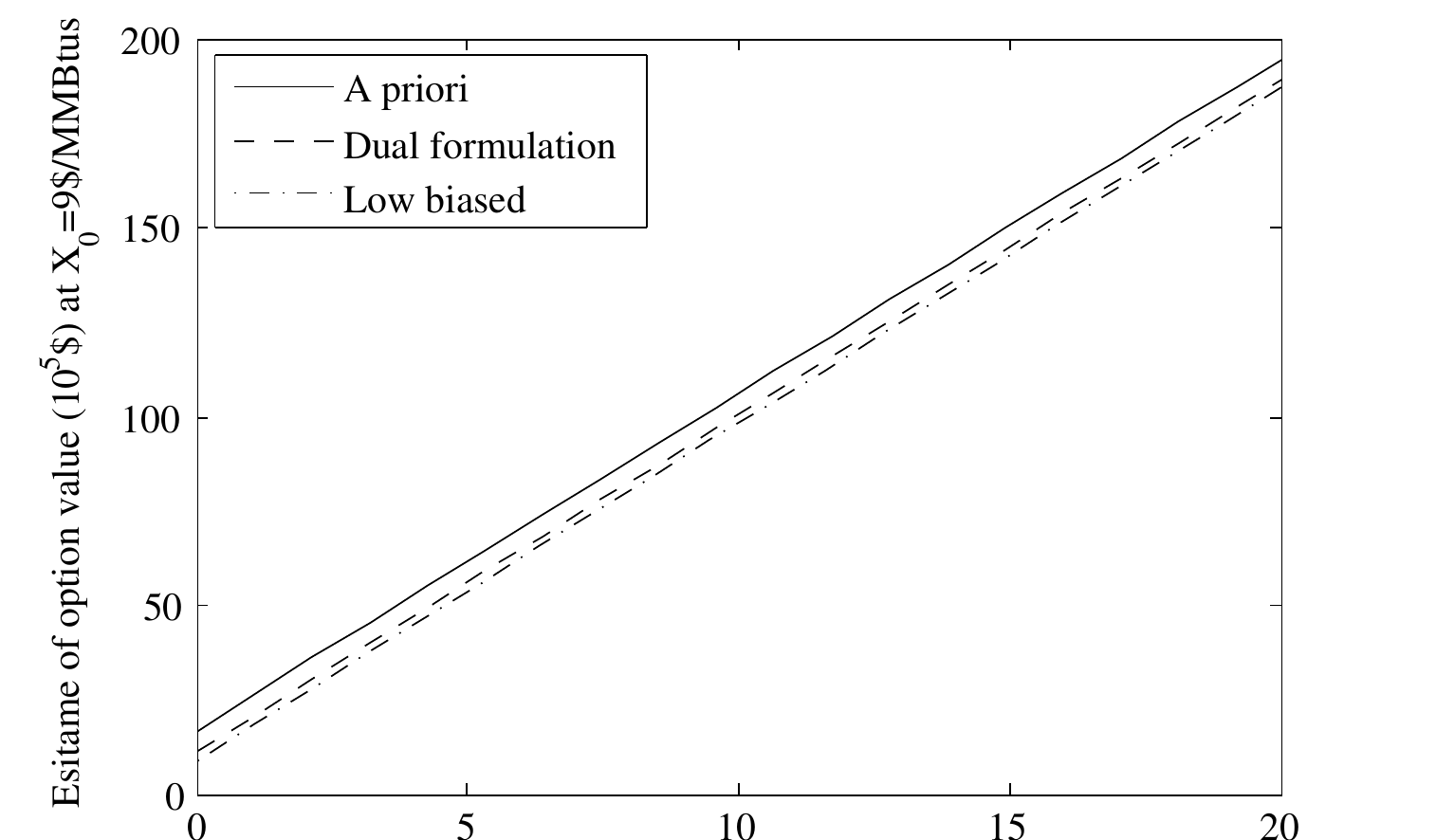}
\caption{Comparison of three methods, $X_0=9$\$$/$MMBtus.}
\label{fig:compAt9}
\end{figure}

\end{document}